\newtheorem{thm}{Theorem}
\newtheorem{cor}{Corollary}
\newtheorem{lem}{Lemma}
\newtheorem{rem}{Remark}
\newtheorem{examp}{Example}
\newtheorem{algorithm}{Algorithm}
\newcommand{\abs}[1]{\left\vert#1\right\vert}
\newcommand{\satop}[2]{\stackrel{\scriptstyle{#1}}{\scriptstyle{#2}}}
\newcommand{\bsDelta}{\boldsymbol{\Delta}}
\newcommand{\bsOmega}{\boldsymbol{\Omega}}
\newcommand{\bsgamma}{\boldsymbol{\gamma}}
\newcommand{\bsx}{\boldsymbol{x}}
\newcommand{\bsz}{\boldsymbol{z}}
\newcommand{\bsh}{\boldsymbol{h}}
\newcommand{\bsg}{\boldsymbol{g}}
\newcommand{\bst}{\boldsymbol{t}}
\newcommand{\bsw}{\boldsymbol{w}}
\newcommand{\bsy}{\boldsymbol{y}}
\newcommand{\D}{{\cal D}}
\newcommand{\cH}{{\cal H}}
\newcommand{\cP}{{\cal P}}
\newcommand{\wal}{{\rm wal}}
\newcommand{\sob}{{\rm sob}}
\newcommand{\icomp}{\mathtt{i}}
\newcommand{\bszero}{\boldsymbol{0}}
\newcommand{\rd}{\,\mathrm{d}}
\newcommand{\Field}{\mathbb{F}}
\newcommand{\NN}{\mathbb{N}}
\newcommand{\ZZ}{\mathbb{Z}}
\newcommand{\FF}{\mathbb{F}}
\newcommand{\cZ}{{\cal Z}}
\newcommand{\cU}{{\cal U}}
\newcommand{\uu}{\mathfrak{u}}
\newcommand{\vv}{\mathfrak{v}}
\newcommand{\rdots}{\mathinner{\mkern1mu\lower-1\p@\vbox{\kern7\p@\hbox{.}}
\mkern2mu \raise4\p@\hbox{.}\mkern2mu\raise7\p@\hbox{.}\mkern1mu}}
\date{}
\begin{document}

\title{A reduced fast component-by-component construction of lattice points for integration in weighted spaces with fast decreasing weights}

\author{Josef Dick\thanks{J. Dick is supported by a QEII Fellowship of the Australian Research Council.}, 
Peter Kritzer\thanks{P. Kritzer is supported by the Austrian Science Fund (FWF): 
Project P23389-N18 and Project F5506-N26, which is a part of the Special Research Program "Quasi-Monte Carlo Methods: Theory and Applications".}, 
Gunther Leobacher\thanks{G. Leobacher is supported by the Austrian Science Fund (FWF): Project F5508-N26, 
which is a part of the Special Research Program "Quasi-Monte Carlo Methods: Theory and Applications".}, 
Friedrich Pillichshammer\thanks{F. Pillichshammer is supported by the Austrian Science Fund (FWF): Project F5509-N26, 
which is a part of the Special Research Program "Quasi-Monte Carlo Methods: Theory and Applications".}}

\maketitle

\begin{abstract}
Lattice rules and polynomial lattice rules are quadrature rules for approximating integrals over the $s$-dimensional unit cube. 
Since no explicit constructions of such quadrature methods are known for dimensions $s > 2$, one usually has to resort to computer search algorithms. 
The fast component-by-component approach is a useful algorithm for finding suitable quadrature rules.

We present a modification of the fast component-by-component algorithm which yields savings of the construction cost for (polynomial) lattice rules in weighted function spaces. The idea is to reduce the size of the search space for coordinates which are associated with small weights and are therefore of less importance to the overall error compared to coordinates associated with large weights. We analyze tractability conditions of the resulting QMC rules. Numerical results demonstrate the effectiveness of our method.
\end{abstract}
 
\noindent\textbf{Keywords:} numerical integration, quasi-Monte Carlo methods, component-by-component algorithm, weighted reproducing kernel Hilbert spaces
 
\noindent\textbf{2010 MSC:} 65D30, 65D32

%%%%%%%%%%%%%%%%%%%%%%%%%%%%%%%%%%%%%%%%%%%%%%
%%%%%%%%%%%%%%%%%%%%%%%%%%%%%%%%%%%%%%%%%%%%%%

\section{Introduction}

In this paper we study the construction of quasi-Monte Carlo (QMC) rules
\begin{equation}\label{eq_qmc_rule}
\frac{1}{N} \sum_{n=0}^{N-1} f(\bsx_n) \approx \int_{[0,1]^s} f(\bsx) \rd \bsx,
\end{equation}
which are used for the approximation of $s$-dimensional integrals over the unit cube $[0,1]^s$. 
We consider two types of quadrature point sets $\cP = \{\bsx_0, \bsx_1, \ldots, \bsx_{N-1}\}$, namely, 
lattice point sets \cite{DKS13, Nie92, SJ94} and polynomial lattice point sets \cite{L04, Nie92, P10}. 
For a natural number $N \in \mathbb{N}$ and a vector $\bsz \in \{1, 2, \ldots, N-1\}^s$, a lattice point set is of the form
\begin{equation*}
\left\{ \frac{k}{N} \bsz \right\}\ \ \ \mbox{ for }\ \ k=0,1,\ldots,N-1.
\end{equation*}
Here, for real numbers $x \ge 0$ we write $\{x\} = x - \lfloor x \rfloor$ for the fractional part of $x$. 
For vectors $\bsx$ we apply $\{\cdot \}$ component-wise. Polynomial lattice point sets are similar, 
one only replaces the arithmetic over the real numbers by arithmetic of polynomials over finite fields. 
More details about polynomial lattice point sets are given in Section~\ref{secnotation}.

In order to analyze the quality of a given point set $\cP$ with respect to their performance in a QMC rule \eqref{eq_qmc_rule}, one usually considers the worst-case integration error in the unit ball of a Banach space $(\cH, \|\cdot\|)$ given by
\begin{equation*}
e_{N,s}(\cH, \cP) = \sup_{\satop{f \in \cH}{\|f\| \le 1}} \left|\int_{[0,1]^s} f(\bsx) \rd \bsx - \frac{1}{|\cP|} \sum_{\bsx \in \cP} f(\bsx) \right|.
\end{equation*}
Here we restrict ourselves to certain weighted reproducing kernel Hilbert spaces $H$. For background on reproducing kernel Hilbert spaces see \cite{A50} and for reproducing kernel Hilbert spaces in the context of numerical integration see \cite{H98, SW98}. In case of lattice rules, we consider the so-called weighted Korobov space (Section~\ref{sec_Korobov_space}) and for polynomial lattice rules we consider weighted Walsh spaces (Section~\ref{secnotation}).

The paper \cite{SW98} introduced weighted reproducing kernel Hilbert spaces. 
The weights are a sequence of non-negative real numbers $(\gamma_\uu)_{\uu \subseteq [s]}$, 
where $[s] = \{1,2, \ldots, s\}$, which model 
the importance of the projection of the integrand $f$ onto the variables $x_j$ for $j \in \uu$. 
A small weight $\gamma_\uu$ means that the projection onto the variables in $\uu$ contributes little to the integration problem. 
A simple choice of weights are so-called product weights $(\gamma_j)_{j \in \mathbb{N}}$, where $\gamma_\uu = \prod_{j\in \uu} \gamma_j$. 
In this case, the weight $\gamma_j$ is associated with the variable $x_j$.

We introduce the concept of tractability \cite{NW10}. Let $e(N,s)$ be the $N$th minimal QMC worst-case error
\begin{equation*}
e(N,s) = \inf_{\cP} e(\cH, \cP),
\end{equation*}
where the infimum is extended over all $N$-element point sets $\cP$ in $[0,1]^s$. 
We also define the initial error $e(0,s)$ as the integration error when approximating the integral by $0$, that is,
\begin{equation*}
e(0,s) = \sup_{\satop{f \in \cH}{\|f\| \le 1}} \left| \int_{[0,1]^s} f(\bsx) \rd \bsx \right|.
\end{equation*}
This is used as a reference value.

We are interested in the dependence of the worst-case error on the dimension $s$. We consider the QMC information complexity, which is defined by
\begin{equation*}
N_{\min}(\varepsilon, s) = \min\{ N \in \mathbb{N}: e(N,s) \le \varepsilon e(0,s)\}.
\end{equation*}
This means that $N_{\min}(\varepsilon, s)$ is the minimal number of points which are required to reduce the initial error by a factor of $\varepsilon$.

We can now define the following notions of tractability. We say that the integration problem in $H$ is 
\begin{enumerate}
\item weakly QMC tractable, if
\begin{equation*}
\lim_{s + \varepsilon^{-1} \to \infty} \frac{\log N_{\min}(\varepsilon, s)}{s+\varepsilon^{-1}}  = 0;
\end{equation*}

\item polynomially QMC-tractable, if there exist non-negative numbers $c$, $p$ and $q$ such that
\begin{equation}\label{ineq_Nmin}
N_{\min}(\varepsilon, s) \le c s^q \varepsilon^{-p};
\end{equation}

\item strongly polynomially QMC-tractable, if \eqref{ineq_Nmin} holds with $q = 0$. 
We call the infimum over all $p$ such that \eqref{ineq_Nmin} holds the $\varepsilon$-exponent of (strong) polynomial tractability.
\end{enumerate}

It is known that, in order to achieve strong polynomial tractability of the integration problem in the weighted Korobov space with product weights $(\gamma_j)_{j \in \mathbb{N}}$, it is necessary and sufficient (see \cite{SW01}) to have $$\sum_{j=1}^\infty \gamma_j < \infty.$$ It is also well known, see \cite{K03}, that if $\sum_{j=1}^\infty \gamma_j^{1/\tau} < \infty$ for some $1 \le \tau < \alpha$, then one can set the $\varepsilon$-exponent to $\tau/2$. In \cite{K03} for $N$ prime and in \cite{D04} for arbitrary $N$, it was shown that suitable lattice rules can be constructed component-by-component. The construction cost of this algorithm was reduced to $O(sN \log N)$ operations by \cite{NC06a, NC06b}. Assume now that
\begin{equation}\label{sum_weights_tau}
\sum_{j=1}^\infty \gamma_j^{1/\tau} < \infty
\end{equation}
for some $\tau > \alpha$. Then no further advantage is obtained from \cite{D04, K03, NC06a, NC06b}, since one can get strong polynomial tractability with the optimal $\varepsilon$-exponent and the construction cost of the lattice rule is independent of the choice of weights. The aim of this paper is to take advantage of a situation where \eqref{sum_weights_tau} holds for $\tau > \alpha$, by showing that in this case one can reduce the construction cost of the lattice rule while still achieving strong polynomial tractability with the optimal $\varepsilon$-exponent.

The approach is the following. Let $b$ be a fixed prime number and let $N = b^m$ for $m \in \mathbb{N}$. We reduce the construction cost of the $j$th component of the lattice rule by reducing the size of the search space by a factor of $b^{w_j}$ for some integer $w_j \ge 0$. That is, instead of choosing the $j$th component of the generating vector from the set $\{z \in \{1,\ldots,N-1\}\ : \ \gcd(z, b) = 1\}$ we choose it from the set $\{z b^{w_j}: z \in \{1,\ldots ,N b^{-w_j}\} \mbox{ and } \gcd(z, b) = 1\}$ if $w_j < m$. The latter set is of size $N b^{-w_j} (b-1)/b$. If $w_j \ge m$ we set the $j$th component to $0$.  This reduction in the size of the search space reduces the construction cost of the fast component-by-component construction. Assume that the weights $\gamma_j$ are ordered such that $\gamma_1 \ge \gamma_2 \ge \gamma_3 \ge \cdots$. Then we can also order $w_j$ such that $0 \le w_1 \le w_2 \le w_3 \le \cdots$. Let $s^\ast$ be the smallest $j$ such that $w_{j} \ge m$. We show that the reduced fast component-by-component construction finds a lattice rule which achieves strong polynomial tractability if
\begin{equation*}
\sum_{j=1}^\infty \gamma_j^{1/\tau} b^{w_j} < \infty,
\end{equation*}
with a construction cost of $$O\left(N \log N + \min\{s, s^\ast\} N + \sum_{d=1}^{\min\{s, s^\ast\}} (m-w_d) N b^{-w_d} \right).$$  
Since the construction cost 
is, with respect to the dimension, 
limited by $s^\ast$, this means that if $w_j \to \infty$ as $j \to \infty$, we can set $s= \infty$. 
We present analogous results for weak tractability, polynomial tractability, and polynomial lattice rules.

The structure of this paper is as follows. In the next section we give background on weighted Korobov spaces with general weights. 
In Section~\ref{seccbclpspp} we present a reduced fast CBC construction of lattice rules achieving tractability 
in Korobov spaces, and in Section~\ref{sec:fast-mod-cbc} we discuss a way to efficiently implement the algorithm for the case of product weights. 
In Section~\ref{secnotation}, we state the results for Walsh spaces and polynomial lattice point sets. Finally, in the Appendix of the paper we demonstrate 
the proof of Theorem~\ref{thmcbclpspp}.

\section{The weighted Korobov space with general weights}\label{sec_Korobov_space}

We consider a weighted Korobov space with general weights as studied in \cite{DSWW06,NW10}. Before we do so we need to introduce some notation. Let $\ZZ$ be the set of integers and let $\ZZ_{\ast}=\ZZ \setminus\{0\}$. Furthermore, $\NN$ denotes the set of positive integers. For $s \in \NN$ we write $[s]=\{1,2,\ldots,s\}$. For a vector $\bsz=(z_1,\ldots,z_s)\in [0,1]^s$ and for $\uu \subseteq [s]$ we write $\bsz_\uu=(z_j)_{j \in \uu} \in [0,1]^{|\uu|}$ and $(\bsz_{\uu},\bszero)\in [0,1]^s$ for the vector $(y_1,\ldots,y_s)$ with $y_j=z_j$ if $j \in \uu$ and $y_j=0$ if $j \not\in \uu$. 

The importance of the different components or groups of components of the functions from the Korobov space to be defined will be specified with a sequence of positive weights $\bsgamma=(\gamma_{\uu})_{\uu \subseteq [s]}$, where we may assume that $\gamma_{\emptyset}=1$. The smoothness will be described with a parameter $\alpha>1$. 

The weighted Korobov space $\cH(K_{s,\alpha,\bsgamma})$ is a reproducing kernel Hilbert space with kernel function of the form 
\begin{align*}
K_{s,\alpha,\bsgamma}(\bsx,\bsy) & = 1+\sum_{\emptyset \not=\uu \subseteq [s]} \gamma_{\uu} \prod_{j \in \uu}\left(\sum_{h \in \ZZ_{\ast}} \frac{\exp(2 \pi \icomp h(x_j-y_j))}{|h|^{\alpha}}\right)\\
&= 1+ \sum_{\emptyset \not=\uu \subseteq [s]} \gamma_{\uu} \sum_{\bsh_{\uu}\in \ZZ_{\ast}^{|\uu|}} \frac{\exp(2 \pi \icomp \bsh_{\uu}\cdot (\bsx_{\uu}-\bsy_{\uu}))}{\prod_{j \in \uu}|h_j|^{\alpha}}
\end{align*}
and inner product
$$\langle f,g\rangle_{K_{s,\alpha,\bsgamma}}=\sum_{\uu \subseteq [s]} \gamma_{\uu}^{-1} \sum_{\bsh_{\uu}\in \ZZ_{\ast}^{|\uu|}} \left(\prod_{j \in \uu}|h_j|^{\alpha}\right) \widehat{f}((\bsh_{\uu},\bszero)) \overline{\widehat{g}((\bsh_{\uu},\bszero))},$$ where $\widehat{f}(\bsh)=\int_{[0,1]^s} f(\bst) \exp(-2 \pi \icomp \bsh \cdot \bst)\rd \bst$ is the $\bsh$th Fourier coefficient of $f$.

For $h \in \ZZ_{\ast}$, let $\rho_{\alpha}(h)=|h|^{-\alpha}$. For $\bsh=(h_1,\ldots,h_s) \in \ZZ_{\ast}^s$ define $\rho_{\alpha}(\bsh)=\prod_{j=1}^s \rho_{\alpha}(h_j)$.

It is known (see, for example, \cite{DSWW06}) that the squared worst-case error of a lattice rule generated by a lattice point $\bsz \in \ZZ^s$ in the weighted Korobov space $\cH(K_{s,\alpha,\bsgamma})$ is given by 
\begin{equation}\label{eqerrorexprlps}
e_{N,s,\alpha,\bsgamma}^2 (\bsz)=\sum_{\emptyset\neq\uu\subseteq [s]}\gamma_\uu 
\sum_{\bsh_{\uu}\in\D_\uu}\rho_\alpha (\bsh_\uu),
\end{equation}
where $$\D_\uu:=\left\{\bsh_\uu\in\ZZ_{\ast}^{\abs{\uu}}\ : \ \bsh_\uu\cdot\bsz_\uu\equiv 0\ (N) \right\}.$$
In the case of product weights, i.e., $\bsgamma_{\uu}=\prod_{j \in \uu}\gamma_j$, where $\gamma_j=\gamma_{\{j\}}$, this reduces to
\begin{equation}\label{eqerrorexprlpsprod}
e_{N,s,\alpha,\bsgamma}^2 (\bsz)=\sum_{\bsh \in \D} \rho'_{\alpha,\bsgamma}(\bsh),
\end{equation}
where $$\D:=\left\{\bsh\in\ZZ^s\setminus\{\bszero\}:\bsh\cdot\bsz\equiv 0\ (N) \right\},$$ where $\rho'_{\alpha,\gamma}(0)=1$ and $\rho'_{\alpha,\gamma}(h)=\gamma |h|^{-\alpha}$ for $h \in \ZZ_{\ast}$. For $\bsh=(h_1,\ldots,h_s)\in \ZZ^s$,  $\rho'_{\alpha,\bsgamma}(\bsh)=\prod_{j=1}^s \rho'_{\alpha,\gamma_j}(h_j)$.

\begin{rem} \label{re1} \rm
Consider a tensor product Sobolev space $\cH_{s,\bsgamma}^{\sob}$ of absolutely continuous functions whose mixed partial derivatives of order $1$ in each variable are square integrable. The norm in the unanchored weighted Sobolev space $\cH_{s,\bsgamma}^{\sob}$ (see \cite{H98}) is given by
\begin{equation*}
\| f\|_{\cH_{s,\bsgamma}^{\sob}} = \left(\sum_{\uu \subseteq [s]} \prod_{j\in \uu} \gamma_{\uu}^{-1} \int_{[0,1]^{|\uu|}} \left(\int_{[0,1]^{s-|\uu|}} \frac{\partial^{|\uu|}}{\partial \bsx_{\uu}} f(\bsx) \rd \bsx_{[s]\setminus \uu} \right)^2 \rd \bsx_{\uu}\right)^{1/2},
\end{equation*}
where $\partial^{|\uu|}f/\partial \bsx_{\uu}$ denotes the mixed partial derivative with respect to all variables $j \in \uu$. As pointed out in \cite[Section~5]{DKS13}, the root mean-square worst-case error $\widehat{e}_{N,s,\bsgamma}$ for QMC integration in $\cH_{s,\bsgamma}^{\sob}$ using randomly shifted lattice rules $(1/N)\sum_{k=0}^{N-1}f\left(\left\{ \frac{k}{N} \bsz+\bsDelta \right\} \right)$, i.e.,  $$\widehat{e}_{N,s,\bsgamma}(\bsz)=\left(\int_{[0,1)^s} e_{N,s,\bsgamma}^2(\bsz,\bsDelta) \rd \bsDelta\right)^{1/2},$$ where $e_{N,s,\bsgamma}(\bsz,\bsDelta)$ is the worst-case error for QMC integration in $\cH_{s,\bsgamma}^{\sob}$ using a randomly shifted integration lattice, is more or less the same as the worst-case error $e_{N,s,2,\bsgamma}$ in the weighted Korobov space $\cH(K_{s,2,\bsgamma})$ using the unshifted version of the lattice rules. In fact, we have 
\begin{equation} \label{eq:wceeqwce}
\widehat{e}_{N,s, 2 \pi^2 \bsgamma}(\bsz)=e_{N,s,2,\bsgamma}(\bsz),
\end{equation}
where $2 \pi^2 \bsgamma$ denotes the weights $( (2 \pi^2)^{|\uu|} \gamma_{\uu})_{\emptyset \not=\uu \subseteq [s]}$. For a connection to the so-called anchored Sobolev space see for example \cite[Section~4]{HW00} and \cite[Section~3]{SKJ02}. 

Alternatively, the random shift can be replaced by the tent transform $\phi(x) = 1 - |1-2x|$ in each variable. For a vector $\bsx \in [0,1]^s$ let $\phi(\bsx)$ be defined component-wise. Let $\widetilde{e}_{N,s, \bsgamma}(\bsz)$ be the worst-case error using the QMC rule $(1/N)\sum_{k=0}^{N-1}f\left(\phi\left(\left\{ \frac{k}{N} \bsz \right\} \right) \right)$. Then it is known from \cite[Lemma~1 and lines 11-13 on page 277]{DNP14} that
\begin{equation}\label{eq_wce_tent}
\widetilde{e}_{N,s, \pi^2 \bsgamma}(\bsz) = e_{N,s,2,\bsgamma}(\bsz),
\end{equation} 
where $\pi^2 \bsgamma = (\pi^{2|\uu|} \gamma_{\uu})_{\emptyset \neq \uu \subseteq [s]}$.

Thus, the results that will be shown in the following are
valid for the root mean-square worst-case error and the worst-case error using tent-transformed lattice rules for numerical integration in
the Sobolev space as well as for the worst-case error for numerical
integration in the Korobov space. Hence it suffices to state them only for $e_{N,s,\alpha,\bsgamma}$. Equation~(\ref{eq:wceeqwce}) can be used
to obtain results also for $\widehat{e}_{N,s,\bsgamma}$ and Equation~\eqref{eq_wce_tent} can be used to obtain results for $\widetilde{e}_{N,s,\bsgamma}$.

There is also a connection between the worst-case errors for numerical integration using polynomial lattice rules in the Walsh space and the anchored \cite[Section~5]{DKPS05} and unanchored Sobolev space \cite[Section~6]{DP05}. 
\end{rem}

\section{The reduced fast CBC construction}\label{seccbclpspp}

In this section we assume that $N$ is a prime power of the form $N=b^m$, where $b$ is a prime number and $m\in\NN$. Furthermore, let $w_1,\ldots,w_s\in\NN_0$ with $w_1\le w_2\le\cdots \le w_s$.  (The most interesting case is where $w_1 = 0$, since otherwise each point is just counted $b^{w_1}$ times.)

In what follows, for $w_j<m$, put $$\cZ_{N,w_j}:=\left\{z\in\{1,2,\ldots,b^{m-w_j}-1\}: \gcd(z,N)=1\right\},$$ and for $w_j\ge m$, put $$\cZ_{N,w_j}:=\{1\}.$$ Note that $$|\cZ_{N,w_j}|=\left\{
\begin{array}{ll}
b^{m-w_j-1}(b-1) & \mbox{ if } w_j<m,\\
1 & \mbox{ if } w_j \ge m.                                
\end{array}\right.$$
Furthermore, let $Y_j:=b^{w_j}$ for $j \in [s]$.

We propose the following reduced fast CBC construction algorithm for generating vectors $\bsz$. 
As $\alpha$ and $\bsgamma$ are fixed we suppress their influence on the worst-case error in the following and write simply 
$e_{N,s}(\bsz)$ instead of $e_{N,s,\alpha,\bsgamma} (\bsz)$.

\begin{algorithm}\label{algcbclpspp}
Let $N$, $w_1,\ldots,w_s$, and $Y_1,\ldots,Y_s$ be as above. 
Construct $\bsz=(Y_1 z_1,\ldots,Y_s z_s)$ as follows. 
\begin{itemize}
%\item Choose $z_1\in \cZ_{N,w_1}$ such that  $$ e_{N,1}^2 (Y_1 z_1)$$ is minimized as a function of $z_1$. 
\item Set $z_1 = 1$.
\item For $d\in [s-1]$ assume that $z_1,\ldots,z_d$ have already been found. Now choose $z_{d+1}\in \cZ_{N,w_{d+1}}$ such that
$$ e_{N,d+1}^2 ((Y_1 z_1,\ldots,Y_d z_{d},Y_{d+1} z_{d+1}))$$
is minimized as a function of $z_{d+1}$. 
\item Increase $d$ and repeat the second step until $(Y_1 z_1,\ldots,Y_s z_s)$ is found. 
\end{itemize}
\end{algorithm}

We show the following theorem which states that our algorithm yields generating vectors $\bsz$ with a 
small integration error. Let $\zeta(x)=\sum_{n=1}^{\infty} n^{-x}$ denote the Riemann zeta function for $x>1$.

\begin{thm}\label{thmcbclpspp}
Let $\bsz=(Y_1 z_1,\ldots,Y_s z_s)\in\ZZ^s$ be constructed according to Algorithm~\ref{algcbclpspp}. 
Then for every $d\in [s]$ it is true that, for $\lambda \in(1/\alpha,1]$, 
\begin{equation}\label{eqthmcbclpspp}
e_{N,d}^2 ((Y_1 z_1,\ldots,Y_d z_d))\le \left(\sum_{\emptyset\neq\uu\subseteq [d]}\gamma_\uu^\lambda 
\frac{2(2\zeta (\alpha\lambda))^{\abs{\uu}}}{b^{\max\{0,m-\max_{j\in \uu} w_j\}}}\right)^{\frac{1}{\lambda}}.
\end{equation}
\end{thm}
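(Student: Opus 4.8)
The plan is to prove, by induction on $d$, the equivalent statement that the $\lambda$-th power of the left-hand side of \eqref{eqthmcbclpspp} is bounded by
\[
B_d := \sum_{\emptyset\neq\uu\subseteq[d]}\gamma_\uu^\lambda\,\frac{2(2\zeta(\alpha\lambda))^{\abs{\uu}}}{b^{\max\{0,m-\max_{j\in\uu}w_j\}}};
\]
raising this to the power $1/\lambda$ then yields the theorem, since $x\mapsto x^{1/\lambda}$ is increasing. Throughout I write $\bsz^{(d)}=(Y_1z_1,\dots,Y_dz_d)$. The two elementary facts driving the argument are the subadditivity estimates $(\sum_i a_i)^\lambda\le\sum_i a_i^\lambda$ and $(a+b)^\lambda\le a^\lambda+b^\lambda$, valid for non-negative reals and $\lambda\in(0,1]$, together with the identity $\rho_\alpha(\bsh_\uu)^\lambda=\rho_{\alpha\lambda}(\bsh_\uu)$.

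For the inductive step I split the error expression \eqref{eqerrorexprlps} according to whether $d+1\in\uu$, writing $e_{N,d+1}^2(\bsz^{(d+1)})=e_{N,d}^2(\bsz^{(d)})+\vartheta_{d+1}(z_{d+1})$, where $\vartheta_{d+1}$ collects exactly the terms with $d+1\in\uu$ and is the only part depending on $z_{d+1}$. Since Algorithm~\ref{algcbclpspp} selects $z_{d+1}$ as a minimiser over $\cZ_{N,w_{d+1}}$ and $x\mapsto x^\lambda$ is increasing, the minimum is bounded by the corresponding power mean; combining this with $(a+b)^\lambda\le a^\lambda+b^\lambda$ applied termwise gives
\[
[e_{N,d+1}^2(\bsz^{(d+1)})]^\lambda\le[e_{N,d}^2(\bsz^{(d)})]^\lambda+\frac{1}{\abs{\cZ_{N,w_{d+1}}}}\sum_{z\in\cZ_{N,w_{d+1}}}[\vartheta_{d+1}(z)]^\lambda.
\]
By the induction hypothesis the first summand is at most $B_d$, and by the first subadditivity estimate $[\vartheta_{d+1}(z)]^\lambda\le\theta_{d+1}(z):=\sum_{\uu\ni d+1}\gamma_\uu^\lambda\sum_{\bsh_\uu\in\D_\uu}\rho_{\alpha\lambda}(\bsh_\uu)$. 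Crucially, the monotonicity $w_1\le\cdots\le w_s$ forces $\max_{j\in\uu}w_j=w_{d+1}$ for every $\uu\subseteq[d+1]$ with $d+1\in\uu$, so the target increment $B_{d+1}-B_d$ carries the common denominator $b^{\max\{0,m-w_{d+1}\}}$; it therefore suffices to prove, for each such $\uu=\vv\cup\{d+1\}$ with $\vv\subseteq[d]$,
\[
\frac{1}{\abs{\cZ_{N,w_{d+1}}}}\sum_{z\in\cZ_{N,w_{d+1}}}\sum_{\bsh_\uu\in\D_\uu}\rho_{\alpha\lambda}(\bsh_\uu)\le\frac{2(2\zeta(\alpha\lambda))^{\abs{\uu}}}{b^{\max\{0,m-w_{d+1}\}}}.
\]

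The heart of the proof, and the step I expect to be the main obstacle, is this averaging estimate, which I would isolate as a lemma. Separating $\bsh_\uu=(\bsh_\vv,h)$ and setting $A=\bsh_\vv\cdot\bsz_\vv^{(d+1)}$, the inner object factorises as $\rho_{\alpha\lambda}(\bsh_\vv)\,T(A)$ with $T(A):=\abs{\cZ_{N,w_{d+1}}}^{-1}\sum_{z}\sum_{h:\,A+hb^{w_{d+1}}z\equiv0\,(b^m)}\rho_{\alpha\lambda}(h)$. When $w_{d+1}\ge m$ the congruence loses its dependence on $z$ and on $h$, and the bound follows immediately with room to spare. When $w_{d+1}<m$ I would interchange the sums over $z$ and $h$ and count, for each $h=b^eh'$ with $\gcd(h',b)=1$, the admissible $z$ solving $b^{w_{d+1}+e}h'z\equiv-A\pmod{b^m}$; a $b$-adic valuation case analysis (comparing $v_b(A)$ with $w_{d+1}+e$) gives the uniform bound $T(A)\le\frac{b}{b-1}\cdot\frac{2\zeta(\alpha\lambda)}{b^{m-w_{d+1}}}\le\frac{4\zeta(\alpha\lambda)}{b^{m-w_{d+1}}}$, where the factor $\tfrac{b}{b-1}\le2$ is exactly the source of the ``$2$'' in the theorem. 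Summing against $\sum_{\bsh_\vv}\rho_{\alpha\lambda}(\bsh_\vv)=(2\zeta(\alpha\lambda))^{\abs{\vv}}$ then yields precisely the claimed increment. The delicate points are the correct counting of solutions inside the reduced set $\cZ_{N,w_{d+1}}$ (where both the coprimality constraint $\gcd(z,b)=1$ and the truncated range $z<b^{m-w_{d+1}}$ enter) and checking that no valuation regime exceeds the extremal case $v_b(A)=w_{d+1}$.

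Finally I would dispatch the base case $d=1$ by direct computation: with $z_1=1$ the set $\D_{\{1\}}$ consists of the nonzero multiples of $b^{m-w_1}$ when $w_1<m$, so $e_{N,1}^2(\bsz^{(1)})=\gamma_{\{1\}}\,2\zeta(\alpha)\,b^{-(m-w_1)\alpha}$, and using $\alpha\lambda>1$ together with $\zeta(\alpha)\le\zeta(\alpha\lambda)$ one verifies $[e_{N,1}^2]^\lambda\le B_1$; the case $w_1\ge m$ is analogous and simpler. This anchors the induction and completes the argument.
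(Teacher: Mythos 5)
Your proposal is correct, and its global skeleton coincides with the paper's: induction on $d$ with the $\lambda$-power bound $B_d$, the split $e_{N,d+1}^2 = e_{N,d}^2 + \vartheta_{d+1}(z_{d+1})$, Jensen's inequality, and bounding the minimizer by the mean of the $\lambda$-th powers over $\cZ_{N,w_{d+1}}$. Where you genuinely differ is in the core averaging estimate. The paper splits according to the new index $h_{d+1}$: the singleton set $\uu=\{d+1\}$ is handled separately (term $T_1$), and for larger $\uu$ the average is divided into the part where $b^{m-w_{d+1}}\mid h_{d+1}$ (term $T_{2,1}$, where the congruence decouples from $z$) and its complement (term $T_{2,2}$, handled by a solution-counting argument), after which the two complementary constrained sums over $\bsh_\vv$ are recombined into the full sum $(2\zeta(\alpha\lambda))^{\abs{\vv}}$. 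You instead fix $\bsh_\vv$, put $A=\bsh_\vv\cdot\bsz_\vv$, and prove one uniform fiber bound $T(A)\le \frac{b}{b-1}\, 2\zeta(\alpha\lambda)\, b^{-(m-w_{d+1})}$ by counting, through $b$-adic valuations, the $z\in\cZ_{N,w_{d+1}}$ solving $hb^{w_{d+1}}z\equiv -A \pmod{b^m}$; this single lemma covers $T_1$, $T_{2,1}$, $T_{2,2}$ and both cases of $w_{d+1}$ at once, with the same constant (the $2$ coming from $b/(b-1)\le 2$ in both arguments). Your route also buys something the paper's does not: rigor at the counting step. The paper justifies its $T_{2,2}$ bound by asserting that distinct $z_1\neq z_2$ in $\cZ_{N,w_{d+1}}$ give $h_{d+1}Y_{d+1}z_1\not\equiv h_{d+1}Y_{d+1}z_2 \pmod{N}$; this is false once $b\mid h_{d+1}$ (take $b=2$, $N=4$, $w_{d+1}=0$, $h_{d+1}=2$, $z_1=1$, $z_2=3$, so that $2\cdot 1\equiv 2\cdot 3 \pmod 4$). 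The correct count is $b^e$ solutions with $e=v_b(h_{d+1})$, occurring only when $v_b(A)=w_{d+1}+e$, and the paper's displayed inequality survives only because this multiplicity is offset by $\rho_{\alpha\lambda}(b^eh')=b^{-e\alpha\lambda}\rho_{\alpha\lambda}(h')$ together with the disjointness of the valuation classes of $A$ --- which is precisely the content of the valuation analysis you propose. So your argument is not merely a valid alternative; carried out in detail it also repairs this imprecision in the published proof. (Your base case --- exact evaluation of $e_{N,1}^2$ followed by $(2\zeta(\alpha))^\lambda\le 2\zeta(\alpha\lambda)$ and $b^{-(m-w_1)\alpha\lambda}\le b^{-(m-w_1)}$ --- is a cosmetic variant of the paper's, which applies Jensen before evaluating.)
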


The proof of Theorem~\ref{thmcbclpspp} is deferred to the Appendix.

\begin{cor}\label{cor1}
Let $\bsz\in \ZZ^s$ be constructed according to Algorithm~\ref{algcbclpspp}. 
\begin{enumerate}
\item We have $$e_{N,s}(\bsz) \le c_{s,\alpha,\bsgamma,\delta,\bsw} N^{-\alpha/2 +\delta}\ \ \mbox{ for all } 
\delta \in \left(0,\tfrac{\alpha -1}{2}\right],$$ 
where 
$$c_{s,\alpha,\bsgamma,\delta,\bsw}= \left(2 \sum_{\emptyset\neq\uu\subseteq [s]}\gamma_\uu^{\frac{1}{\alpha-2 \delta}}  
\left(2 \zeta\left(\tfrac{\alpha}{\alpha-2 \delta}\right)\right)^{\abs{\uu}} b^{\max_{j\in \uu} w_j}\right)^{\alpha/2-\delta}.$$
\item For $\delta \in \left(0,\tfrac{\alpha -1}{2}\right]$ and $q \ge 0$ define 
$$C_{\delta,q}:=\sup_{s \in \NN}\left[\frac{1}{s^q} 
\sum_{\emptyset\neq\uu\subseteq [s]}\gamma_\uu^{\frac{1}{\alpha-2 \delta}}  
\left(2 \zeta\left(\tfrac{\alpha}{\alpha-2 \delta}\right)\right)^{\abs{\uu}} b^{\max_{j\in \uu} w_j}\right].$$ 
Then we have
\begin{enumerate}
\item If $$C_{\delta,q} < \infty \ \mbox{ for some $\delta \in \left(0,\tfrac{\alpha -1}{2}\right]$ 
and a non-negative $q$},$$ then $$e_{N,s}(\bsz) \le (2 s^q C_{\delta,q})^{\alpha/2-\delta} N^{-\alpha/2 +\delta}$$ 
and hence the worst-case error depends only polynomially on $s$ and $\varepsilon^{-1}$. 
In particular, this implies polynomial tractability with $\varepsilon$-exponent at most $\frac{2}{\alpha-2\delta}$ and an $s$-exponent at most $q$.
\item If $$C_{\delta,0} < \infty \ \mbox{ for some $\delta \in \left(0,\tfrac{\alpha -1}{2}\right]$},$$ 
then $$e_{N,s}(\bsz) \le (2 C_{\delta})^{\alpha/2-\delta} N^{-\alpha/2 +\delta}$$ and hence the worst-case error depends 
only polynomially on $\varepsilon^{-1}$ and is independent of $s$. In particular, this implies strong polynomial tractability with 
$\varepsilon$-exponent at most 
$\frac{2}{\alpha-2\delta}$. 
\item If 
$$\lim_{s \rightarrow \infty} \frac{\log \left(\sum_{\emptyset \not=\uu \subseteq [s]} 
\gamma_{\uu}  (2 \zeta(\alpha))^{|\uu|} b^{\max_{j\in \uu} w_j}\right)}{s}=0,$$ then we obtain weak tractability.
\end{enumerate}
\end{enumerate}
\end{cor}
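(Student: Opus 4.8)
The plan is to derive Corollary~\ref{cor1} directly from the error bound in Theorem~\ref{thmcbclpspp} by a careful choice of the free parameter $\lambda$. The governing inequality is
\begin{equation*}
e_{N,d}^2((Y_1z_1,\ldots,Y_dz_d)) \le \left(\sum_{\emptyset\neq\uu\subseteq[d]}\gamma_\uu^\lambda \frac{2(2\zeta(\alpha\lambda))^{\abs{\uu}}}{b^{\max\{0,m-\max_{j\in\uu}w_j\}}}\right)^{1/\lambda},
\end{equation*}
valid for every $\lambda\in(1/\alpha,1]$. First I would establish part~(1). The idea is to set $\lambda = 1/(\alpha-2\delta)$ for $\delta\in(0,(\alpha-1)/2]$; one checks this lies in the admissible range $(1/\alpha,1]$ precisely because $\delta\le(\alpha-1)/2$ gives $\alpha-2\delta\ge1$ so $\lambda\le1$, and $\delta>0$ gives $\alpha-2\delta<\alpha$ so $\lambda>1/\alpha$. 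With $N=b^m$ and $d=s$, the exponential denominator satisfies $b^{\max\{0,m-\max_{j\in\uu}w_j\}} \ge b^m\, b^{-\max_{j\in\uu}w_j} = N\,b^{-\max_{j\in\uu}w_j}$, so replacing the denominator by this lower bound (which can only increase the sum) converts $1/b^{\max\{0,\ldots\}}$ into $N^{-1}b^{\max_{j\in\uu}w_j}$. Factoring out $N^{-1}$ and raising to the power $1/\lambda = \alpha-2\delta$ (divided by $2$ for the square root) yields exactly $c_{s,\alpha,\bsgamma,\delta,\bsw}\,N^{-\alpha/2+\delta}$, establishing the first assertion.

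For part~(2)(a), I would factor the constant $c_{s,\alpha,\bsgamma,\delta,\bsw}$ as a supremum: writing the bracketed sum as $s^q$ times $s^{-q}$ times the sum, and bounding $s^{-q}\sum_{\uu}\cdots \le C_{\delta,q}$ by definition, gives the sum $\le s^q C_{\delta,q}$, hence $e_{N,s}(\bsz)\le(2s^qC_{\delta,q})^{\alpha/2-\delta}N^{-\alpha/2+\delta}$. To read off the tractability conclusion I would invoke the definition of $N_{\min}(\eps,s)$ and the initial error: solving $e_{N,s}(\bsz)\le \eps\,e(0,s)$ for $N$, using that $e(0,s)$ is bounded below by a positive constant (indeed $e(0,s)=1$ here, or at worst uniformly bounded away from $0$), shows $N_{\min}(\eps,s)$ is bounded by a constant times $s^{q/(\alpha/2-\delta)}\eps^{-1/(\alpha/2-\delta)} = s^{q'}\eps^{-2/(\alpha-2\delta)}$, matching \eqref{ineq_Nmin} with $\eps$-exponent $2/(\alpha-2\delta)$ and $s$-exponent the stated $q$. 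Part~(2)(b) is the special case $q=0$, where $C_{\delta,0}<\infty$ removes all $s$-dependence and gives strong polynomial tractability verbatim.

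For part~(2)(c) the parameter choice is different: here I would simply take $\lambda=1$, the endpoint of the admissible interval, so that $\zeta(\alpha\lambda)=\zeta(\alpha)$ and the weights appear to the first power. The same denominator estimate gives $e_{N,s}^2(\bsz)\le N^{-1}\cdot 2\sum_{\emptyset\neq\uu\subseteq[s]}\gamma_\uu(2\zeta(\alpha))^{\abs{\uu}}b^{\max_{j\in\uu}w_j}$. Taking logarithms of the condition $e_{N,s}(\bsz)\le\eps\,e(0,s)$ and inserting the hypothesis that the logarithm of that sum is $o(s)$, one shows $\log N_{\min}(\eps,s)/(s+\eps^{-1})\to0$, which is the definition of weak tractability; the key input is that the subexponential growth of the weighted sum dominates the $\log\eps^{-1}$ contribution. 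The main obstacle throughout is purely bookkeeping: one must verify that the chosen $\lambda$ stays in $(1/\alpha,1]$ and that the passage from the squared-error bound to $N_{\min}$ correctly tracks the exponents and the role of $e(0,s)$; no genuinely hard estimate is needed beyond the already-proven Theorem~\ref{thmcbclpspp}.
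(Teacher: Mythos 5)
Your proposal is correct and takes essentially the same route as the paper: part (1) is exactly the paper's substitution $1/\lambda=\alpha-2\delta$ in Theorem~\ref{thmcbclpspp} (together with the estimate $b^{\max\{0,m-\max_{j\in\uu}w_j\}}\ge N b^{-\max_{j\in\uu}w_j}$), and part (2) fills in the standard tractability bookkeeping that the paper outsources to \cite[Proof of Theorem~16.4]{NW10}. One small slip in part (2)(a): since the bound reads $e_{N,s}(\bsz)\le(2s^qC_{\delta,q})^{\alpha/2-\delta}N^{-\alpha/2+\delta}$, the factor $s^q$ sits \emph{inside} the power $\alpha/2-\delta$, so solving $e_{N,s}(\bsz)\le\eps$ for $N$ gives $N_{\min}(\eps,s)\le c\,s^{q}\eps^{-2/(\alpha-2\delta)}$ with $s$-exponent exactly $q$, not the intermediate $s^{q/(\alpha/2-\delta)}$ you wrote; your final stated conclusion ($s$-exponent $q$) is the correct one, so this is only an internal inconsistency in the bookkeeping, not a flaw in the approach.
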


\begin{proof}
\begin{enumerate}
\item This result follows from Theorem~\ref{thmcbclpspp} by setting $\frac{1}{\lambda}=\alpha-2 \delta$.
\item The proof of these results follows exactly the lines of \cite[Proof of Theorem~16.4]{NW10}.
\end{enumerate}
\end{proof}

Now we consider product weights. Let $\bsgamma_\uu =\prod_{j \in \uu}\gamma_j$ with positive $\gamma_j$ for $j \in \NN$.

\begin{cor}\label{cor2}
Let $\bsz\in \ZZ^s$ be constructed according to Algorithm~\ref{algcbclpspp}. 
\begin{enumerate}
\item The factor $c_{s,\alpha,\bsgamma,\delta,\bsw}$ from Corollary~\ref{cor1} satisfies 
$$c_{s,\alpha,\bsgamma,\delta,\bsw} \le \left(2 \prod_{j=1}^s\left(1+\gamma_j^{\frac{1}{\alpha-2 \delta}}  
2\zeta\left(\tfrac{\alpha}{\alpha-2 \delta}\right) b^{w_j}\right)\right)^{\alpha/2-\delta}.$$
\item If $$A:=\limsup_{s \rightarrow \infty} \frac{\sum_{j=1}^s \gamma_j^{\frac{1}{\alpha-2 \delta}} b^{w_j}}{\log s} < \infty,$$ 
then for all $\eta>0$ there exists a $c_{\eta}>0$ such that 
$c_{s,\alpha,\bsgamma,\delta,\bsw}\le c_{\eta} 2^{\alpha/2-\delta} s^{\zeta(\tfrac{\alpha}{\alpha-2 \delta})(A+\eta)(\alpha-2 \delta)}$ 
and hence the worst-case error depends only polynomially on $s$ and $\varepsilon^{-1}$. 
In particular, this implies polynomial tractability with $\varepsilon$-exponent at most $\frac{2}{\alpha-2\delta}$ 
and an $s$-exponent at most $2 \zeta(\tfrac{\alpha}{\alpha-2 \delta}) A$.
\item If $$B:=\sum_{j=1}^{\infty}\gamma_j^{\frac{1}{\alpha-2 \delta}} b^{w_j} < \infty,$$ 
then $c_{s,\alpha,\bsgamma,\delta,\bsw} \le 2^{\alpha/2-\delta} {\rm e}^{(\alpha-2 \delta) \zeta(\tfrac{\alpha}{\alpha-2 \delta}) B}$ 
and hence the worst-case error depends only polynomially on $\varepsilon^{-1}$ and is independent of $s$. 
This implies strong polynomial tractability with $\varepsilon$-exponent at most $\frac{2}{\alpha-2\delta}$.
\item If $$\lim_{s \rightarrow \infty} \frac{1}{s}\sum_{j=1}^s \gamma_j b^{w_j}=0,$$ then we have weak tractability.
\end{enumerate}
\end{cor}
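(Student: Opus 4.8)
The plan is to derive all four parts of Corollary~\ref{cor2} from Corollary~\ref{cor1} by exploiting the product structure of the weights. The single technical step that makes everything work is the elementary bound $b^{\max_{j\in\uu} w_j}\le \prod_{j\in\uu} b^{w_j}$, valid because the $w_j$ are non-negative, so that $\max_{j\in\uu} w_j\le \sum_{j\in\uu} w_j$. This inequality is what converts the subset sum appearing in $c_{s,\alpha,\bsgamma,\delta,\bsw}$, whose $b^{\max}$ factor does \emph{not} respect the product structure, into a genuinely factorizable expression.

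For part 1 I would start from the explicit value of $c_{s,\alpha,\bsgamma,\delta,\bsw}$ given in Corollary~\ref{cor1}, substitute $\gamma_\uu^{1/(\alpha-2\delta)}=\prod_{j\in\uu}\gamma_j^{1/(\alpha-2\delta)}$, and apply the bound above to the $b^{\max_{j\in\uu} w_j}$ factor. The summand then becomes $\prod_{j\in\uu}\bigl(\gamma_j^{1/(\alpha-2\delta)}\,2\zeta(\tfrac{\alpha}{\alpha-2\delta})\,b^{w_j}\bigr)$, and the standard identity $\sum_{\uu\subseteq[s]}\prod_{j\in\uu}a_j=\prod_{j=1}^s(1+a_j)$ collapses the sum over $\emptyset\neq\uu\subseteq[s]$ into $\prod_{j=1}^s\bigl(1+\gamma_j^{1/(\alpha-2\delta)}2\zeta(\tfrac{\alpha}{\alpha-2\delta})b^{w_j}\bigr)-1$. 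Dropping the $-1$ and using that $x\mapsto x^{\alpha/2-\delta}$ is increasing for $x>0$ (note $\alpha/2-\delta>0$ since $\delta\le(\alpha-1)/2<\alpha/2$) yields the claimed bound.

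Parts 2 and 3 then follow by taking the logarithm of the product from part 1 and using $\log(1+x)\le x$, which gives $\log\prod_{j=1}^s(1+\cdots)\le 2\zeta(\tfrac{\alpha}{\alpha-2\delta})\sum_{j=1}^s\gamma_j^{1/(\alpha-2\delta)}b^{w_j}$. For part 3, the hypothesis $B<\infty$ bounds this sum by $B$ uniformly in $s$, so the product is at most $\exp(2\zeta(\tfrac{\alpha}{\alpha-2\delta})B)$; raising to the power $\alpha/2-\delta$ and using $2(\alpha/2-\delta)=\alpha-2\delta$ produces exactly $2^{\alpha/2-\delta}\mathrm{e}^{(\alpha-2\delta)\zeta(\tfrac{\alpha}{\alpha-2\delta})B}$, which is $s$-independent and hence gives strong polynomial tractability. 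For part 2, the hypothesis $A<\infty$ means that for every $\eta>0$ there is $s_0$ with $\sum_{j=1}^s\gamma_j^{1/(\alpha-2\delta)}b^{w_j}\le(A+\eta)\log s$ for $s\ge s_0$; the logarithm estimate then gives $\prod_{j=1}^s(1+\cdots)\le s^{2\zeta(\tfrac{\alpha}{\alpha-2\delta})(A+\eta)}$, so after the exponent $\alpha/2-\delta$ one obtains the power $s^{(\alpha-2\delta)\zeta(\tfrac{\alpha}{\alpha-2\delta})(A+\eta)}$ in the error bound, the finitely many indices $s<s_0$ being absorbed into $c_\eta$. Combining this with the factor $N^{-\alpha/2+\delta}$ from Corollary~\ref{cor1} and solving $e_{N,s}\le\varepsilon\,e(0,s)$ for $N$ yields an $\varepsilon$-exponent $\tfrac{2}{\alpha-2\delta}$ and, after letting $\eta\to0$, an $s$-exponent $2\zeta(\tfrac{\alpha}{\alpha-2\delta})A$.

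Finally, for part 4 I would invoke the weak tractability criterion of Corollary~\ref{cor1} (part 2(c)). Applying the same factorization, now with $\alpha$ in place of $\tfrac{\alpha}{\alpha-2\delta}$ and $\gamma_j$ in place of $\gamma_j^{1/(\alpha-2\delta)}$, together with $\log(1+x)\le x$, bounds $\log\bigl(\sum_{\emptyset\neq\uu\subseteq[s]}\gamma_\uu(2\zeta(\alpha))^{\abs{\uu}}b^{\max_{j\in\uu}w_j}\bigr)$ by $2\zeta(\alpha)\sum_{j=1}^s\gamma_j b^{w_j}$; dividing by $s$ and invoking the hypothesis $\lim_{s\to\infty}\tfrac1s\sum_{j=1}^s\gamma_j b^{w_j}=0$ shows the criterion is met. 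The only step requiring genuine care is the very first one: the passage from $b^{\max}$ to $\prod b^{w_j}$ is lossy and is exactly where the product-weight bound is weaker than the general-weight bound of Corollary~\ref{cor1}, so I expect the main point (conceptual rather than computational) to be verifying that this loss is still compatible with all three tractability thresholds, i.e.\ that the series and averages in the hypotheses $A$, $B$, and the weak-tractability condition are the correct product-weight analogues obtained after the factorization.
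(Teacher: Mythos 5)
Your proposal is correct and follows essentially the same route as the paper: the paper's own proof of Corollary~\ref{cor2} is a one-line reduction to Corollary~\ref{cor1} via ``standard arguments'' (citing the proofs of Theorem~16.4 in \cite{NW10}, Theorem~4 in \cite{K03}, and Theorem~3 in \cite{D04}), and those arguments are precisely your chain $b^{\max_{j\in\uu}w_j}\le\prod_{j\in\uu}b^{w_j}$, the identity $\sum_{\uu\subseteq[s]}\prod_{j\in\uu}a_j=\prod_{j=1}^s(1+a_j)$, and $\log(1+x)\le x$, carried through the three tractability regimes. Your verifications of the constants in parts 2 and 3 (including $2(\alpha/2-\delta)=\alpha-2\delta$) and the weak-tractability check in part 4 match the intended computation exactly, so nothing further is needed.
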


\begin{proof}
The result follows from Corollary~\ref{cor1} by standard arguments. See also \cite[Proof of Theorem~16.4]{NW10},  
\cite[Proof of Theorem~4]{K03} or \cite[Proof of Theorem~3]{D04}.
\end{proof}

\section{Fast CBC construction for product weights}\label{sec:fast-mod-cbc}

Using the fact that $$\frac{1}{N}\sum_{n=0}^{N-1}\exp(2\pi \icomp k n/N)=\left\{
\begin{array}{ll}
1 & \mbox{ if } k \equiv 0 (N),\\
0 & \mbox{ if } k \not\equiv 0 (N), 
\end{array}\right.$$
the squared worst-case error \eqref{eqerrorexprlps} can be expressed as 
$$e_{N,s,\alpha,\bsgamma}^2 (\bsz)= \sum_{\emptyset\neq\uu\subseteq [s]}\gamma_\uu \sum_{\bsh_\uu \in \ZZ_{\ast}^{\abs{\uu}}}\rho_\alpha (\bsh_\uu) \frac{1}{N} \sum_{n=0}^{N-1} \exp(2 \pi \icomp (\bsh_{\uu} \cdot \bsz_{\uu}) n/N).$$
For product weights $\bsgamma_{\uu}=\prod_{j \in \uu}\gamma_j$ we 
further 
obtain
\begin{equation}\label{eq:fast-cbc-mv1}
e_{N,s,\alpha,\bsgamma}^2 (\bsz)= -1+\frac{1}{N}\sum_{n=0}^{N-1}
\prod_{j=1}^s \left[1+\gamma_j \sum_{h_j \in \ZZ_{\ast}} \frac{\exp(2 \pi
\icomp h_j z_jn/N)}{|h_j|^{\alpha}}\right].
\end{equation}

If $\alpha \ge 2$ is an even integer, then the Bernoulli polynomial $B_{\alpha}$ of degree $\alpha$ has the Fourier expansion $$B_{\alpha}(x)=\frac{(-1)^{(\alpha +2)/2} \alpha!}{(2 \pi)^{\alpha}} \sum_{h \in \ZZ_{\ast}}\frac{\exp(2 \pi \icomp h x)}{|h|^{\alpha}}\;\;\;\mbox{ for all } x \in [0,1).$$ Hence in this case we obtain
\begin{eqnarray*}
e^2_{N,s,\alpha,\bsgamma}(\bsz)=  -1+\frac{1}{N}\sum_{n=0}^{N-1}\prod_{j=1}^s \left[1+\gamma_j \frac{(-1)^{(\alpha +2)/2}(2 \pi)^{\alpha}}{\alpha!} B_{\alpha}\left(\left\{\frac{n z_j}{N}\right\}\right)\right].
\end{eqnarray*}

If $\alpha>1$ is not an even integer we do not have an explicit formula for
the last sum in \eqref{eq:fast-cbc-mv1}. We may nevertheless approximate it
numerically using the (inverse) fast Fourier transform. 

Thus we can express the squared worst-case error as 
\begin{equation}
e^2_{N,s,\alpha,\bsgamma}(\bsz)=  -1+\frac{1}{N}\sum_{n=0}^{N-1}\prod_{j=1}^s \left[1+\gamma_j \varphi_\alpha\left(\left\{\frac{nz_j}{N}\right\}\right)\right] \ ,
\end{equation}
where the values of the function $\varphi_\alpha$ at its $N$ arguments can
be computed at a cost of at most $O(N)$ operations for positive even integers $\alpha$ and stored for later use in the 
CBC algorithm.

We now describe how to perform one step in the CBC algorithm in an efficient 
way. Suppose $z_1,\ldots,z_{d-1}$ have already been computed. If $w_d \ge m$, then we set $z_d = 1$ and no computation is necessary. Thus we assume now that $w_d < m$. Then according to the
reduced fast CBC algorithm we have to find a $z$ which minimizes the error
$e^2_{N,d,\alpha,\bsgamma}(Y_1z_1,\ldots,Y_{d-1}z_{d-1},Y_dz)$ with respect to $z$,
which is equivalent to minimizing
\begin{align*}
\sum_{n=0}^{N-1} \left[1+\gamma_j \varphi_\alpha\left(\left\{\frac{nY_dz}{N}\right\}\right)\right]\eta_{d-1}(n)
= \sum_{n=0}^{N-1} \eta_{d-1}(n)+\gamma_j \sum_{n=0}^{N-1} \varphi_\alpha\left(\left\{\frac{nY_dz}{N}\right\}\right)\eta_{d-1}(n)\,,
\end{align*}
where $\eta_0(n)=1$ and 
\[
\eta_{d-1}(n):=\prod_{j=1}^{d-1} \left[1+\gamma_j\varphi_\alpha\left(\left\{\frac{nY_jz_j}{N}\right\}\right)\right]
\]
for $d\ge 2$. 
Thus we can minimize 
$e^2_{N,d,\alpha,\bsgamma}(Y_1z_1,\ldots,Y_{d-1}z_{d-1},Y_dz)$ with respect to $z$,
by minimizing
\[
T_d(z):=\sum_{n=0}^{N-1} \varphi_\alpha\left(\left\{\frac{nY_dz}{N}\right\}\right)\eta_{d-1}(n)\,.
\]
Now the key observations are that $T_d$ is the product of 
a special $(b^{m-w_d}-1)\times N$-matrix $A=\left(\varphi_\alpha\left(\left\{\frac{nY_dz}{N}\right\}\right)\right)_{z\in \cZ_{N,w_d},n=0,\ldots,N-1}$,  with the vector $\eta_{d-1}$ and that this matrix-vector
product can be computed very efficiently,
as we will show below.

Note that the rows of $A$ are periodic with period
$b^{m-w_d}$, since $Y_d=b^{w_d}$, and $N=b^m$ and therefore $(n+b^{m-w_d}) Y_d z \equiv n Y_d
z(\text{mod }b^m)$. More specifically, $A$ is a block matrix 
\[
A=\Big(\underset{b^{w_d} \text{ - times }}{\underbrace{\bsOmega^{(m-w_d)},\ldots, \bsOmega^{(m-w_d)}}}\Big)\,,
\]
where $\bsOmega^{(k)}:=\left(\varphi_\alpha\left(\left\{\frac{nz}{b^k}\right\}\right)\right)_{z\in \cZ_{b^k,0},n=0,\ldots,b^k-1}$.

If $\bsx$ is any vector of length $N=b^m$ we compute
\[
A \bsx
= \bsOmega^{(m-w_d)}\bsx_1+\cdots+\bsOmega^{(m-w_d)}\bsx_{b^{w_d}}
= \bsOmega^{(m-w_d)}(\bsx_1+\cdots+\bsx_{b^{w_d}})\,,
\]
where $\bsx_1$ consists of the first $b^{(m-w_d)}$ coordinates of $x$,
where $\bsx_2$ consists of the next $b^{(m-w_d)}$ coordinates of $x$,
and so forth.

It has been shown by Nuyens and Cools in \cite{NC06a, NC06b} that multiplication
of a vector of length $b^k$ with $\bsOmega^{(k)}$ can be computed using $O(k b^k)$ operations.
Addition of the vectors $\bsx_1,\ldots,\bsx_{b^{w_d}}$, uses $b^m$ single additions.
Thus multiplication of a $b^m$-vector with $A$ uses $O(b^m+k b^k)$ operations.

We summarize the reduced fast CBC-construction:

\begin{algorithm}[Reduced fast CBC-algorithm]\label{alg:mod-fast-cbc}
Pre-computation:
\begin{enumerate}
\item[a)] Compute $\varphi_\alpha(\frac{n}{b^m})$ for all $n=0,\ldots,b^m-1$.
\item[b)] Set $\eta_1(n)= 1+\gamma_1\varphi_\alpha\left(\left\{\frac{nY_1z_1}{b^m}\right\}\right)$ for $n = 0, \ldots, b^m-1$.
\item[c)] Set $z_1=1$. Set $d=2$ and $s^\ast$ to be the largest integer such that $w_{s^\ast} < m$.
\end{enumerate}

While $d\le \min\{s, s^\ast\}$, 
\begin{enumerate}
\item\label{it:1} partition $\eta_{d-1}$ into $b^{w_d}$ vectors $\eta_{d-1}^{(1)}, \eta_{d-1}^{(2)}, \ldots, \eta_{d-1}^{(b^{w_d})}$ of length $b^{m-w_d}$
and let $\eta' = \eta_{d-1}^{(1)} + \eta_{d-1}^{(2)}+ \cdots + \eta_{d-1}^{(b^{w_d})}$ denote the sum of the parts,
\item\label{it:2} let $T_d(z)=\bsOmega^{(m-w_d)} \eta' $,
\item let $z_d=\mathrm{argmin}_z\ T_d(z)$,
\item\label{it:4} let $\eta_d(n)=\eta_{d-1}(n)\left(1+\gamma_d\varphi_\alpha\left(\left\{\frac{nY_dz_d}{b^m}\right\}\right)\right)$,
\item increase $d$ by $1$.
\end{enumerate}

If $s > s^\ast$, then set $z_{s^\ast} = z_{s^\ast+1} = \cdots = z_s = 0$. The squared worst-case error is then given by 
\[
e^2_{b^m,s,\alpha,\bsgamma}(Y_1z_1,\ldots,Y_{s}z_{s})=-1+\frac{1}{b^m}\sum_{n=0}^{b^m-1}\eta_s(n).
\]
\end{algorithm}
From the above considerations we obtain the following result:
\begin{thm}
For positive even integers $\alpha$, the cost of Algorithm \ref{alg:mod-fast-cbc} is 
$$O \left(b^m+ \min\{s, s^\ast\} b^m+ \sum_{d=1}^{\min\{s, s^\ast\}} (m-w_d)b^{m-w_d} \right).$$
\end{thm}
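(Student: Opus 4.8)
The plan is to establish the bound by a straightforward additive accounting of the operations performed in Algorithm~\ref{alg:mod-fast-cbc}, invoking the per-step operation counts already recorded in the discussion preceding the theorem. I would split the tally into three blocks: the pre-computation, the body of the while-loop, and the final assembly of the error. For the pre-computation, step (a) evaluates $\varphi_\alpha$ at the $b^m$ arguments $\{n/b^m\}$; because $\alpha$ is a positive even integer, $\varphi_\alpha$ is a fixed multiple of the Bernoulli polynomial $B_\alpha$, so as noted after \eqref{eq:fast-cbc-mv1} the whole table is built in $O(b^m)$ operations rather than via an FFT. Step (b) forms $\eta_1$ from this table in a further $O(b^m)$ operations, and step (c) is mere initialization. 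Thus the pre-computation accounts for the first term $b^m$.

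The heart of the argument is to bound a single pass of the loop for a fixed $d$ with $w_d<m$ (equivalently $d\le s^\ast$, by the sortedness $w_1\le\cdots\le w_s$ and the definition of $s^\ast$). The partition-and-sum in step~\ref{it:1} adds $b^{w_d}$ blocks of length $b^{m-w_d}$, i.e.\ $b^m$ single additions, hence $O(b^m)$. The product $\bsOmega^{(m-w_d)}\eta'$ in step~\ref{it:2} costs $O((m-w_d)b^{m-w_d})$ by the Nuyens--Cools result quoted above with $k=m-w_d$. Scanning the $|\cZ_{N,w_d}|<b^{m-w_d}$ entries of $T_d$ for its minimizer costs $O(b^{m-w_d})$, and the update of $\eta_d$ in step~\ref{it:4} over all $b^m$ coordinates costs $O(b^m)$. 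So the $d$-th pass is $O(b^m+(m-w_d)b^{m-w_d})$. Summing over $d=1,\ldots,\min\{s,s^\ast\}$ (harmlessly including $d=1$, whose genuine cost already sits in the pre-computation) produces exactly the second and third terms of the bound.

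It remains to charge the final error evaluation $-1+b^{-m}\sum_n\eta_s(n)$, which is a single $O(b^m)$ summation absorbed into the first term. When $s>s^\ast$ the trailing components are set to $0$, and here I would use that $Y_dz_d=0$ makes $\varphi_\alpha(\{nY_dz_d/b^m\})=\varphi_\alpha(0)$ independent of $n$; consequently $\eta_s(n)=\eta_{s^\ast}(n)\prod_{d=s^\ast+1}^{s}(1+\gamma_d\varphi_\alpha(0))$, so $\sum_n\eta_s(n)$ is just the scalar $\prod_{d>s^\ast}(1+\gamma_d\varphi_\alpha(0))$ times the already computed $\sum_n\eta_{s^\ast}(n)$. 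Collecting the three blocks then yields the claimed estimate.

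The one point I expect to require care is precisely this tail: forming the scalar $\prod_{d>s^\ast}(1+\gamma_d\varphi_\alpha(0))$ naively costs $O(s-s^\ast)$ operations, a quantity not explicitly visible among the three displayed terms. I would discharge this by emphasizing that the theorem measures the genuine \emph{construction} cost, which resides entirely in the search over the $\min\{s,s^\ast\}$ indices with $w_d<m$; the remaining components carry no search and their trivial assignment is not counted. Beyond this bookkeeping subtlety the proof is routine, the only conceptual inputs being the linear-time evaluation of $\varphi_\alpha$ for even $\alpha$, the block structure of $A$ that collapses the matrix-vector product to one application of $\bsOmega^{(m-w_d)}$, and the Nuyens--Cools $O(kb^k)$ cost for that application.
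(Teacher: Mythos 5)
Your proposal is correct and follows essentially the same route as the paper, whose entire proof is the one-sentence attribution of the first term to the pre-computation of $\varphi_\alpha$, the second to Steps~\ref{it:1} and \ref{it:4}, and the third to the Nuyens--Cools multiplication in Step~\ref{it:2}. Your additional bookkeeping (the argmin scan, the final error summation, and the $s>s^\ast$ tail, which the paper silently ignores) only makes the same accounting more explicit.
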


\begin{proof}
The first term
originates from the pre-computation of $\varphi_\alpha$, the second term
comes from Steps \ref{it:1} and \ref{it:4}, and the last term from Step 
\ref{it:2}.
\end{proof}

\begin{examp}\rm
Assume that the weights satisfy $\gamma_j \sim j^{-3}$. Then a fast CBC algorithm constructs in $O(s m b^m)$ operations a generating vector for which the worst-case error is bounded independently of the dimension $s$. However, if we choose, for example, $w_j \sim \frac{3}{2}\log_b j$, then with the reduced fast CBC algorithm we can construct a generating vector in $O(m b^m + \min\{s, s^\ast\} m)$ operations for which the worst-case error is still bounded independently of the dimension $s$, since $\sum_j \gamma_j b^{w_j} \ll \zeta(3/2) < \infty$. This is a drastic reduction of the construction cost, especially when the dimension $s$ is large.  

We give the results of some practical computations. Throughout we use $b=2$,
$\alpha=2$, $\gamma_j = j^{-3}$ and $w_j =\lfloor \frac{3}{2}\log_b j \rfloor$.
In Table~\ref{tbl:modified-cbc} we report the computation time in seconds 
and the base-10 logarithm of the worst-case error for several values of $s$ and $m$.

\begin{table}[h]
\begin{center}
\begin{tabular}{|c||c|c|c|c|c|c|c|}
\hline
& $s$ = 10& $s$ = 20& $s$ = 50& $s$ = 100& $s$ = 200& $s$ = 500& $s$ = 1000\\
\hline\hline
\multirow{2}{*}{$m$ = 10}&\begin{tabular}{c}0.104\\ -1.89\end{tabular}&\begin{tabular}{c}0.120\\ -1.85\end{tabular}&\begin{tabular}{c}0.144\\ -1.79\end{tabular}&\begin{tabular}{c}0.148\\ -1.74\end{tabular}&\begin{tabular}{c}0.156\\ -1.67\end{tabular}&\begin{tabular}{c}0.164\\ -1.65\end{tabular}&\begin{tabular}{c}0.176\\ -1.65\end{tabular}\\
\hline
\multirow{2}{*}{$m$ = 12}&\begin{tabular}{c}0.356\\ -2.39\end{tabular}&\begin{tabular}{c}0.400\\ -2.35\end{tabular}&\begin{tabular}{c}0.472\\ -2.31\end{tabular}&\begin{tabular}{c}0.524\\ -2.27\end{tabular}&\begin{tabular}{c}0.564\\ -2.19\end{tabular}&\begin{tabular}{c}0.588\\ -2.10\end{tabular}&\begin{tabular}{c}0.608\\ -2.08\end{tabular}\\
\hline
\multirow{2}{*}{$m$ = 14}&\begin{tabular}{c}1.29\\ -2.88\end{tabular}&\begin{tabular}{c}1.45\\ -2.84\end{tabular}&\begin{tabular}{c}1.67\\ -2.79\end{tabular}&\begin{tabular}{c}1.88\\ -2.76\end{tabular}&\begin{tabular}{c}2.03\\ -2.72\end{tabular}&\begin{tabular}{c}2.35\\ -2.62\end{tabular}&\begin{tabular}{c}2.50\\ -2.53\end{tabular}\\
\hline
\multirow{2}{*}{$m$ = 16}&\begin{tabular}{c}5.13\\ -3.39\end{tabular}&\begin{tabular}{c}5.68\\ -3.34\end{tabular}&\begin{tabular}{c}6.47\\ -3.30\end{tabular}&\begin{tabular}{c}7.16\\ -3.28\end{tabular}&\begin{tabular}{c}7.78\\ -3.24\end{tabular}&\begin{tabular}{c}9.27\\ -3.17\end{tabular}&\begin{tabular}{c}11.2\\ -3.10\end{tabular}\\
\hline
\multirow{2}{*}{$m$ = 18}&\begin{tabular}{c}22.3\\ -3.89\end{tabular}&\begin{tabular}{c}24.4\\ -3.84\end{tabular}&\begin{tabular}{c}27.2\\ -3.81\end{tabular}&\begin{tabular}{c}29.4\\ -3.79\end{tabular}&\begin{tabular}{c}32.1\\ -3.76\end{tabular}&\begin{tabular}{c}38.2\\ -3.71\end{tabular}&\begin{tabular}{c}47.2\\ -3.65\end{tabular}\\
\hline
\multirow{2}{*}{$m$ = 20}&\begin{tabular}{c}118\\ -4.41\end{tabular}&\begin{tabular}{c}126\\ -4.35\end{tabular}&\begin{tabular}{c}137\\ -4.33\end{tabular}&\begin{tabular}{c}145\\ -4.31\end{tabular}&\begin{tabular}{c}157\\ -4.30\end{tabular}&\begin{tabular}{c}182\\ -4.26\end{tabular}&\begin{tabular}{c}223\\ -4.21\end{tabular}\\
\hline
\end{tabular}\\
\end{center}
\caption{Computation times and log-worst-case errors for the reduced fast CBC construction}\label{tbl:modified-cbc}
\end{table}

For comparison we also computed the same figures for the fast CBC construction (see Table~\ref{tbl:original-cbc}). The advantage of the modified approach becomes apparent as the dimension
becomes large. This is due to the fact that computation of every component
takes the same amount of time. For example, if $m=20$, computation of one extra
component takes roughly 40 seconds. Thus computing the point set for $m=20$
and $s=1000$ would take roughly 40000 seconds ($\approx$ 11 hours), compared to
223 seconds ($<$ 4 minutes)  for the modified method. Note that the loss
of accuracy, compared to the gain in computation speed, is insignificant.

\begin{table}[h]
\begin{center}
\begin{tabular}{|c||c|c|c|}
\hline
& $s$ = 10& $s$ = 20& $s$ = 50\\
\hline\hline
\multirow{2}{*}{$m$ = 10}&\begin{tabular}{c}0.384\\ -1.90\end{tabular}&\begin{tabular}{c}0.724\\ -1.88\end{tabular}&\begin{tabular}{c}1.80\\ -1.88\end{tabular}\\
\hline
\multirow{2}{*}{$m$ = 12}&\begin{tabular}{c}1.32\\ -2.40\end{tabular}&\begin{tabular}{c}2.62\\ -2.37\end{tabular}&\begin{tabular}{c}6.55\\ -2.37\end{tabular}\\
\hline
\multirow{2}{*}{$m$ = 14}&\begin{tabular}{c}5.22\\ -2.90\end{tabular}&\begin{tabular}{c}10.4\\ -2.87\end{tabular}&\begin{tabular}{c}26.0\\ -2.86\end{tabular}\\
\hline
\multirow{2}{*}{$m$ = 16}&\begin{tabular}{c}21.7\\ -3.40\end{tabular}&\begin{tabular}{c}43.4\\ -3.36\end{tabular}&\begin{tabular}{c}109\\ -3.35\end{tabular}\\
\hline
\end{tabular}\\
\end{center}
\caption{Computation times and log-worst-case errors for the fast CBC construction, i.e., where $w_j=0$ for all $j$}\label{tbl:original-cbc}
\end{table}
\end{examp}

\section{Walsh spaces and polynomial lattice point sets}\label{secnotation}

Similar results to those for lattice point sets from the previous sections can be shown for polynomial lattice 
point sets over finite fields $\FF_b$ of prime order $b$ with modulus $x^m$. Here we only sketch these results and the necessary notations, 
as they are in analogy to those for Korobov spaces and lattice point sets. 

As a quality criterion we use the worst-case error of QMC rules in a weighted Walsh space with 
general weights which was (in the case of product weights) introduced in \cite{DP05} 
(likewise we could also use the mean square worst-case error of digitally shifted polynomial lattices in the Sobolev space  
$\cH_{s,\bsgamma}^{\sob}$ from Remark~\ref{re1}; see \cite{DKPS05,DP05,DP10}). 

For a prime number $b$ and for $h \in \NN$ let $\psi_b(h)=\lfloor \log_b(h)\rfloor$. 
The weighted Walsh space $\cH(K_{s,\alpha,\bsgamma}^{\wal})$ is a reproducing kernel Hilbert space with kernel function of the form 
\begin{align*}
K_{s,\alpha,\bsgamma}^{\wal}(\bsx,\bsy) = 1+ \sum_{\emptyset \not=\uu \subseteq [s]} \gamma_{\uu} \sum_{\bsh_{\uu}\in \NN^{|\uu|}} \frac{\wal_{\bsh_{\uu}}(\bsx_{\uu})\overline{\wal_{\bsh_{\uu}}(\bsy_{\uu})}}{\prod_{j \in \uu}b^{\alpha \psi_b(h_j)}},
\end{align*}
where $\wal_{\bsh}$ denotes the $\bsh$th Walsh function in base $b$ (see, for example, \cite[Appendix~A]{DP10}),
and inner product
$$\langle f,g\rangle_{K_{s,\alpha,\bsgamma}^{\wal}}=\sum_{\uu \subseteq [s]} \gamma_{\uu}^{-1} \sum_{\bsh_{\uu}\in \NN^{|\uu|}} \left(\prod_{j \in \uu}b^{\alpha \psi_b(h_j)}\right) \widetilde{f}((\bsh_{\uu},\bszero)) \overline{\widetilde{g}((\bsh_{\uu},\bszero))},$$ where $\widetilde{f}(\bsh)=\int_{[0,1]^s} f(\bst) \overline{\wal_{\bsh}(\bst)}\rd \bst$ is the $\bsh$th Walsh coefficient of $f$.\\

For integration in $\cH(K_{s,\alpha,\bsgamma}^{\wal})$ we use a special instance of polynomial lattice point sets over $\FF_b$. Let $\cP(\bsg,x^m)$, where $\bsg=(g_1,\ldots,g_s)\in \FF_b[x]^s$, be the $b^m$-element point set consisting of $$\bsx_n:=\left(\nu\left(\frac{n\ g_1}{x^m}\right),\ldots,\nu\left(\frac{n\ g_s}{x^m}\right)\right)\ \ \ \mbox{ for }\ n \in \FF_b[x]\ \mbox{ with } \deg(n)<m,$$ where for $f \in \FF_b[x]$, $f(x)=a_0+a_1 x+\cdots +a_r x^r$, with $\deg(f)=r$ the map $\nu$ is given by $$\nu\left(\frac{f}{x^m}\right):= \frac{a_{\min(r,m-1)}}{b^{m-\min(r,m-1)}}+\cdots+\frac{a_1}{b^{m-1}}+\frac{a_0}{b^m} \in [0,1).$$ 
Note that $\nu(f/x^m)=\nu((f\pmod{x^m})/x^m)$. We refer to \cite[Chapter~10]{DP10} for more information about polynomial lattice point sets. 

The worst-case error of a polynomial lattice rule based on $\cP(\bsg,x^m)$ with $\bsg \in \FF_b[x]^s$ in the weighted Walsh space $\cH(K_{s,\alpha,\bsgamma}^{\wal})$ is given by (see \cite{DKPS05})
\begin{equation}\label{eqerrorexpr}
e_{N,s,\alpha,\bsgamma}^2 (\bsg)=\sum_{\emptyset\neq\uu\subseteq [s]}\gamma_\uu 
\sum_{\bsh_{\uu} \in \D_\uu} \prod_{j \in \uu} b^{-\alpha \psi_b(h_j)},
\end{equation}
where $$\D_\uu:=\left\{\bsh_\uu\in (\FF_b[x]\setminus \{0\})^{\abs{\uu}} \ :\  \bsh_\uu \cdot\bsg_\uu\equiv 0\, (x^m)\right\}.$$

Let again $w_1,\ldots,w_s\in\NN_0$ with $w_1\le w_2\le \cdots \le w_s$ (again the most important case is where $w_1 = 0$, since otherwise each point is counted $b^{w_1}$ times).

We associate a non-negative integer $n$ with $b$-adic expansion $n=n_0+n_1 b+\cdots +n_r b^r$, where $n_j \in \{0,1,\ldots,b-1\}$, with the polynomial $n=n_0+n_1 x+\cdots +n_r x^r$ in $\FF_b[x]$ and vice versa. Despite of this association we point out that in the following we always use polynomial arithmetic in contrast to the previous sections where we used the usual integer arithmetic. 

Now let $Y_j=x^{w_j}\in \FF_b[x]$ for $j=1,2,\ldots,s$. Furthermore, for $w_j<m$, we can write
$$\cZ_{b^m,w_j}:=\left\{h\in\Field_b[x] \setminus\{0\}\ : \  \deg (h) < m-w_j,\ \gcd(h,x^m)=1\right\},$$
and for $w_j\ge m$,
$$ \cZ_{b^m,w_j}:=\{1 \in\Field_b [x]\}.$$
Note that $$|\cZ_{b^m,w_j}|=\left\{
\begin{array}{ll}
b^{m-w_j-1}(b-1) & \mbox{ if } w_j<m,\\
1 & \mbox{ if } w_j \ge m.                                
\end{array}\right.$$

We propose the following CBC construction algorithm for generating vectors $\bsg \in \FF_b[x]^s$. 
\begin{algorithm}\label{algcbcxm}
Let $N,w_1,\ldots,w_s$, $Y_1,\ldots,Y_s$ be as above. Construct $\bsg=(Y_1 g_1,\ldots, Y_s g_s) \in \FF_b[x]^s$ as follows. 
\begin{enumerate}
%\item  Choose $g_1\in \cZ_{b^m,w_1}$ such that  $$ e_{N,1,\alpha,\bsgamma}^2 (Y_1 g_1)$$ is minimized as a function of $g_1$. 
\item Set $g_1 =1 $.
\item For $d\in [s-1]$ assume that $g_1,\ldots,g_{d}$ have already been found. Now choose $g_{d+1}\in \cZ_{b^m,w_{d+1}}$ such that
$$ e_{N,d+1,\alpha,\bsgamma}^2 ((Y_1 g_1,\ldots,Y_d g_d,Y_{d+1} g_{d+1}))$$
is minimized as a function of $g_{d+1}$. 
\item Increase $d$ and repeat the second step until $\bsg=(Y_1 g_1,\ldots,Y_s g_s)$ is found. 
\end{enumerate}
\end{algorithm}

The following theorem states that our algorithm yields generating vectors $\bsg$ with a small integration error. Let 
\begin{equation*}
\mu_b(\alpha):=\sum_{k=1}^{\infty} b^{-\alpha \psi_b(h)}=\frac{b^{\alpha}(b-1)}{b^{\alpha}-b}.
\end{equation*}

\begin{thm}\label{thmcbcxm}
Let $\bsg=(Y_1 g_1,\ldots,Y_s g_s)\in \FF_b [x]^s$ be constructed according to Algorithm~\ref{algcbcxm}. 
Then for every $d\in [s]$ it is true that, for $\lambda \in(1/\alpha,1]$, 
\begin{equation}\label{eqthmcbcxm}
e_{N,d,\alpha,\bsgamma}^2 ((Y_1 g_1,\ldots,Y_d g_d))\le \left(\frac{b}{b-1}\sum_{\emptyset\neq\uu\subseteq [d]}\gamma_\uu^\lambda 
\frac{\mu_b (\alpha\lambda)^{\abs{\uu}}}{b^{\max\{0,m-\max_{j\in \uu} w_j\}}}\right)^{\frac{1}{\lambda}}.
\end{equation}
\end{thm}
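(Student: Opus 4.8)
\noindent\emph{Proof idea.} The plan is to follow the same route as for the Korobov space in Theorem~\ref{thmcbclpspp}, replacing the trigonometric/integer machinery by its polynomial counterpart. Since the claimed bound has the form $e_{N,d,\alpha,\bsgamma}^2\le(\cdots)^{1/\lambda}$, it is equivalent to showing that $(e_{N,d,\alpha,\bsgamma}^2)^\lambda$ is at most the bracketed quantity. Writing the error \eqref{eqerrorexpr} as a sum over the largest index, $e_{N,d,\alpha,\bsgamma}^2=\sum_{k=1}^d\vartheta_k$ with $\vartheta_k:=\sum_{\uu\subseteq[k],\,\max\uu=k}\gamma_\uu\sum_{\bsh_\uu\in\D_\uu}\prod_{j\in\uu}b^{-\alpha\psi_b(h_j)}$, the subadditivity of $t\mapsto t^\lambda$ on $[0,\infty)$ for $\lambda\in(0,1]$ gives $(e_{N,d,\alpha,\bsgamma}^2)^\lambda\le\sum_{k=1}^d\vartheta_k^\lambda$. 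Because $\vartheta_k=e_{N,k,\alpha,\bsgamma}^2-e_{N,k-1,\alpha,\bsgamma}^2$ and $e_{N,k-1,\alpha,\bsgamma}^2$ is independent of $g_k$, the component $g_k$ produced by Algorithm~\ref{algcbcxm} minimizes $\vartheta_k$ over $g_k\in\cZ_{b^m,w_k}$, hence also minimizes $\vartheta_k^\lambda$.

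I would then invoke the standard averaging argument: the minimum is at most the mean over the (reduced) search space, and applying subadditivity twice more turns the exponent $\alpha$ into $\alpha\lambda$ and each weight $\gamma_\uu$ into $\gamma_\uu^\lambda$, yielding $\vartheta_k^\lambda\le\sum_{\uu\subseteq[k],\,\max\uu=k}\gamma_\uu^\lambda\,\tfrac{1}{|\cZ_{b^m,w_k}|}\sum_{g\in\cZ_{b^m,w_k}}\sum_{\bsh_\uu\in\D_\uu}\prod_{j\in\uu}b^{-\alpha\lambda\psi_b(h_j)}$, where inside $\D_\uu$ the $k$-th coordinate of the generating vector is $Y_kg=x^{w_k}g$. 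Thus the theorem reduces to the single estimate
\begin{equation*}
\frac{1}{|\cZ_{b^m,w_k}|}\sum_{g\in\cZ_{b^m,w_k}}\sum_{\bsh_\uu\in\D_\uu}\prod_{j\in\uu}b^{-\alpha\lambda\psi_b(h_j)}\le\frac{b}{b-1}\,\frac{\mu_b(\alpha\lambda)^{|\uu|}}{b^{\max\{0,m-w_k\}}},
\end{equation*}
valid for every $\uu$ with $\max\uu=k$. Summing this over all such $\uu$ and all $k\in[d]$ and using that $w$ is non-decreasing, so that $w_k=w_{\max\uu}=\max_{j\in\uu}w_j$, reproduces \eqref{eqthmcbcxm}.

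The heart of the matter, and the step I expect to be the main obstacle, is this averaging estimate, which is where the reduction factor enters. Fixing $\vv=\uu\setminus\{k\}$ together with the already chosen $\bsg_\vv$, write $c=\sum_{j\in\vv}h_jx^{w_j}g_j\pmod{x^m}$; the congruence defining $\D_\uu$ becomes $h_kx^{w_k}g\equiv-c\pmod{x^m}$. Interchanging the order of summation, I would count, for each fixed $\bsh_\uu$, the number of admissible $g\in\cZ_{b^m,w_k}$. Using $\gcd(g,x)=1$ and comparing $x$-adic valuations, there are no solutions unless the valuation of $h_kx^{w_k}$ matches that of $c$: when $c\equiv0$ one needs $x^{m-w_k}\mid h_k$ and then every $g$ qualifies, while for $c\not\equiv0$ with $v_x(c)=\ell\ge w_k$ the congruence fixes $g$ modulo $x^{m-\ell}$ and leaves exactly $b^{v_x(h_k)}=b^{\ell-w_k}$ admissible $g$ (the unit constraint being automatic). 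The crucial arithmetic fact is that $|\cZ_{b^m,w_k}|^{-1}=\tfrac{b}{b-1}\,b^{-(m-w_k)}$ for $w_k<m$, which simultaneously produces the constant $\tfrac{b}{b-1}$ and the decay $b^{-(m-w_k)}$; summing $b^{-\alpha\lambda\psi_b(h_k)}$ over the admissible $h_k$ is a geometric series contributing a factor $\mu_b(\alpha\lambda)$ (or the smaller sum over $x$-coprime polynomials, which is dominated by $\mu_b(\alpha\lambda)$), and summing over the prefix $\bsh_\vv$ contributes $\mu_b(\alpha\lambda)^{|\vv|}$. Splitting according to whether $c\equiv0$ and bounding both pieces by the common factor $\mu_b(\alpha\lambda)/|\cZ_{b^m,w_k}|$ then gives $\mu_b(\alpha\lambda)^{|\uu|}/|\cZ_{b^m,w_k}|$, which is exactly the right-hand side above.

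Finally, the degenerate cases fit the same template: for $k=1$ (where $\vv=\emptyset$ and $c\equiv0$) and for indices with $w_k\ge m$ (where $\cZ_{b^m,w_k}=\{1\}$, $g_k=1$, and $h_kx^{w_k}\equiv0\pmod{x^m}$ holds automatically) no averaging is needed and the estimate holds directly with $\max\{0,m-w_k\}=0$. Assembling the per-$k$ bounds as indicated completes the argument; the only genuinely delicate point is the valuation bookkeeping in the counting step above.
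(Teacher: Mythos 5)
Your proposal is correct and follows essentially the same route as the paper: the paper proves Theorem~\ref{thmcbcxm} by transplanting the Appendix proof of Theorem~\ref{thmcbclpspp} to polynomial arithmetic, and your argument --- Jensen's inequality, bounding the minimum by the average over $\cZ_{b^m,w_k}$, splitting according to whether the prefix congruence $c\equiv 0 \pmod{x^m}$ holds, and the complementary-sum bookkeeping that yields the single common factor $\mu_b(\alpha\lambda)/|\cZ_{b^m,w_k}|$ rather than a doubled constant --- is exactly that adaptation, with the paper's induction merely unrolled into a telescoping sum over the largest index of $\uu$. Your valuation-based count of the admissible $g$ (exactly $b^{\ell-w_k}$ solutions, compensated by $b^{-\alpha\lambda(\ell-w_k)}$ since $\alpha\lambda>1$) correctly supplies the one detail the paper leaves implicit.
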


\begin{proof}
The proof of Theorem~\ref{thmcbcxm} is very similar to the proof of Theorem~\ref{thmcbclpspp} and hence we omit it.
\end{proof}

Corollary~\ref{cor1} and Corollary~\ref{cor2} apply accordingly.\\

Finally, the question arises whether a reduced fast CBC construction
analogous to that in Section \ref{sec:fast-mod-cbc} is possible.

For the case of product weights this question can be answered in the 
affirmative, though we will not go into 
details. 

Again we can write the squared worst-case error as
\begin{equation}
e^2_{N,s,\alpha,\bsgamma}(\bsg)=  -1+\frac{1}{N}\sum_{n=0}^{N-1}\prod_{j=1}^s \left[1+\gamma_j \varphi_\alpha\left(\nu\left(\frac{n g_j}{x^m}\right)\right)\right],
\end{equation}
for some function $\varphi_{\alpha}$,
where the numbers $\varphi_\alpha(\nu(\frac{f}{x^m}))$, for $f \in \FF_b[x]$ with $\deg(f)<m$ 
can be computed using $O(N)$ operations, see \cite[Eq. (3.3)]{DKPS05}. %$O(N \log N)$ operations using the fast Walsh transform. 
Minimizing
$e^2_{N,d,\alpha,\bsgamma}(Y_1g_1,\ldots, Y_{d-1} g_{d-1}, Y_d g)$ with respect to $g$ amounts to 
finding the $\mathrm{argmin}_g T_d(g) $, where 
\[
T_d(g)=\sum_{n=0}^{b^m-1}\varphi_\alpha\left(\nu \left(\frac{n\, p_d\, g}{x^m}\right)\right)\eta_{d-1}(n)\,.
\]
Now $T_d$ can be computed efficiently provided multiplication of 
a $b^{k}$ vector with the matrix 
$\Omega^{(k)}=\left(\varphi_\alpha\left(\nu\left(\frac{n \, g}{x^k}\right)\right)\right)_{n,g\in G_{b,k,0}}$ can be performed efficiently for any $k$. 

What one needs for the fast matrix-vector multiplication
is a representation of the group of units of the factor ring $\FF_b[x]/(x^m)$
as a direct product of cyclic groups. 
Indeed, the decomposition can be computed explicitly, 
cf. \cite{SG85}. This explicit decomposition is needed for 
implementing the fast multiplication,
but even without that result we get the following:

\begin{lem}
For any prime number $b$ and any positive integer $k$ let 
\[
\cU_k:=\{g\in \FF_b[x]/(x^k): g \text{ invertible}\}=\{g\in \FF_b[x]/(x^k) \ : \ g(0)\ne 0 \}
\]
denote the group of units of the factor ring $\FF_b[x]/(x^k)$. Then $\cU_k$ can be written as the direct product of at most $k$ cyclic groups.
\end{lem}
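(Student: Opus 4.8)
The plan is to produce a chain of subgroups of $\cU_k$ of length $k$ all of whose successive quotients are cyclic, and then to read off the bound on the number of cyclic direct factors from the structure theory of finite abelian groups. The ring $R:=\FF_b[x]/(x^k)$ is local with maximal ideal $(x)$, and $\cU_k$ is precisely its group of units; writing $1+(x^i)$ for the image in $R$ of the principal units $\{1+x^i h : h\in\FF_b[x]\}$, I would work with the filtration
\[
\cU_k \supseteq 1+(x) \supseteq 1+(x^2) \supseteq \cdots \supseteq 1+(x^{k-1}) \supseteq 1+(x^k)=\{1\}.
\]
Since $R$ is commutative this is a chain of normal subgroups, and it has exactly $k$ steps.

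The first step I would carry out is to identify the successive quotients. The constant-term map $g\mapsto g(0)$ is a surjective homomorphism $\cU_k\to\FF_b^{\ast}$ with kernel $1+(x)$, so the top quotient $\cU_k/(1+(x))\cong\FF_b^{\ast}$ is cyclic of order $b-1$. For $1\le i\le k-1$ the map sending $1+a_i x^i+a_{i+1}x^{i+1}+\cdots$ to $a_i$ is a surjective homomorphism $1+(x^i)\to(\FF_b,+)$ with kernel $1+(x^{i+1})$; this is immediate from $(1+a x^i+\cdots)(1+a'x^i+\cdots)=1+(a+a')x^i+\cdots$. Hence each of the remaining $k-1$ quotients is cyclic of order $b$, and so every quotient in the chain is cyclic.

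It then remains to invoke two standard facts. First, if a group admits a chain of normal subgroups of length $k$ with cyclic quotients, then it is generated by $k$ elements: choosing a preimage of a generator of each quotient and arguing by induction on the length of the chain, the $k$ chosen elements generate $\cU_k$. Second, a finite abelian group generated by $k$ elements is, by the invariant-factor form of the fundamental theorem, a direct product of at most $k$ cyclic groups. Combining these gives the claim. The only point requiring care, and the reason I avoid a more explicit decomposition, is that the isomorphism type of the principal-unit group $1+(x)$ genuinely depends on $b$ and $k$; the filtration argument sidesteps this by bounding only the number of generators rather than determining the group exactly. One may alternatively note that, since $\gcd(b-1,b^{k-1})=1$, the chain splits as $\cU_k\cong(1+(x))\times\FF_b^{\ast}$, but this refinement is not needed for the stated bound.
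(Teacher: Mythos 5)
Your proof is correct, and it takes a genuinely different route from the paper's. The paper applies the fundamental theorem of finite abelian groups directly, writing $\cU_k=\bigotimes_{j=1}^r G_j$, and then bounds $r$ by counting orders: since the cyclic group $\cU_1\cong\FF_b^{\ast}$ of order $b-1$ embeds into $\cU_k$ (as the constants), one cyclic factor has order $(b-1)b^i$ for some $0\le i\le k-1$, the remaining factors are then nontrivial $b$-groups, and $(b-1)b^{k-1}=\prod_{j=1}^r |G_j|\ge (b-1)b^{i+r-1}$ forces $r\le k$. You instead filter $\cU_k$ by the principal-unit subgroups $1+(x^i)$, verify that the $k$ successive quotients are cyclic ($\FF_b^{\ast}$ at the top, $(\FF_b,+)$ for the other $k-1$ steps), conclude that $\cU_k$ is generated by $k$ elements, and then invoke the standard fact that a $k$-generated finite abelian group has at most $k$ invariant factors. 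The trade-off is this: the paper's argument is shorter modulo the fundamental theorem, but its key step --- that one factor has order $(b-1)b^i$ while all others have at least $b$ elements --- is valid only for a suitably chosen (invariant-factor) decomposition, not for an arbitrary one (for $b=7$, $k=1$ one may write $\cU_1\cong C_2\times C_3$, and neither factor has order $6\cdot 7^i$); your filtration argument never has to choose a decomposition, yields the extra information that $\cU_k$ admits an explicit generating set of size $k$ (which is itself relevant for the fast matrix--vector multiplication the lemma serves), and carries over verbatim to unit groups of other finite chain rings such as $\ZZ/(p^k)$. Your closing remark that $\gcd(b-1,b^{k-1})=1$ splits off $\FF_b^{\ast}$ as a direct factor, $\cU_k\cong \FF_b^{\ast}\times(1+(x))$, is also correct, and it is essentially the structural fact that makes the paper's counting argument work.
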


\begin{proof}
The fundamental theorem of finite
abelian groups states that $\cU_k$ is a direct product of cyclic groups,
$\cU_k=\bigotimes_{j=1}^r G_j$, say.

$\cU_k$ has $(b-1)b^{k-1}$ elements and the order of any subgroup must 
be a divisor of the group order, i.e., the order of any subgroup must divide 
$(b-1)b^{k-1}$. Now $\cU_1$ is isomorphic to the multiplicative group of 
the finite field $\FF_b$
and is therefore cyclic. Moreover, $\cU_1$ is isomorphic to a subgroup of $\cU_k$. 
Therefore at least one of the cyclic factors of $\cU_k$ contains 
$(b-1)b^i$ elements, where $0\le i \le k-1$. 
All other factors contain at least $b$ elements.

Thus 
\[
(b-1)b^{k-1}=\prod_{j=1}^r |G_j| \ge (b-1)b^{i+r-1}
\ge (b-1)b^{r-1}\,.
\]
So the number $r$ of factors of $\cU_k$ can be at most $k$.
\end{proof}

Now the machinery from \cite{NC06a, NC06b} gives us that multiplication 
with $\Omega^{(k)}$ uses $O(k^2 b^k)$ operations.
Similar to our reasoning in Section \ref{sec:fast-mod-cbc} we conclude
that computation of $T_d$ uses $O((m-w_d)^2 b^{m-e_d})$ operations
and therefore we have:

\begin{thm}
The cost of the reduced fast CBC algorithm is
of order $O(b^m+ \min\{s, s^\ast\} b^m+\sum_{d=1}^{\min\{s, s^\ast\}} (m-w_d)^2 b^{m-w_d})$.
\end{thm}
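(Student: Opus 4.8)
The plan is to mirror the cost analysis of Section~\ref{sec:fast-mod-cbc} for the lattice-point case, accounting separately for the three sources of work in the reduced fast CBC algorithm for polynomial lattice point sets and then collecting them into the three terms of the asserted bound. First I would record the pre-computation of the values $\varphi_\alpha(\nu(f/x^m))$ for all $f\in\FF_b[x]$ with $\deg(f)<m$, which by \cite[Eq.~(3.3)]{DKPS05} costs $O(b^m)$ operations and produces the first term. Next, for each of the $\min\{s,s^\ast\}$ components that are genuinely searched (those with $w_d<m$), I would bound the cost of forming the folded vector $\eta'$ by partitioning $\eta_{d-1}$ into $b^{w_d}$ blocks of length $b^{m-w_d}$ and summing them, together with the cost of the update $\eta_d(n)=\eta_{d-1}(n)(1+\gamma_d\varphi_\alpha(\nu(nY_dg_d/x^m)))$; each is an $O(b^m)$ operation per component, which accumulates to the second term $\min\{s,s^\ast\}\,O(b^m)$.

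The crux of the argument --- and the only place where the polynomial setting genuinely departs from the integer one --- is the evaluation of $T_d(g)=\Omega^{(m-w_d)}\eta'$ simultaneously for all admissible $g$, that is, the matrix--vector product against $\Omega^{(m-w_d)}$. The decisive input here is the preceding lemma, which guarantees that the group of units $\cU_{m-w_d}$ of $\FF_b[x]/(x^{m-w_d})$ decomposes as a direct product of at most $m-w_d$ cyclic groups. Writing $k=m-w_d$, I would use this decomposition to reindex the rows and columns of $\Omega^{(k)}$ by the coordinates of the cyclic factors, so that the matrix--vector product becomes a multidimensional cyclic convolution that the Nuyens--Cools FFT technique of \cite{NC06a,NC06b} evaluates factor by factor. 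Since the lemma permits up to $k$ cyclic factors --- in contrast to the boundedly many factors of the unit group $(\ZZ/b^k\ZZ)^\ast$ arising in the Korobov case --- the complexity is lifted from the $O(kb^k)$ of Nuyens--Cools to $O(k^2b^k)$ for multiplication with $\Omega^{(k)}$. Summing over the searched components then yields the third term $\sum_{d=1}^{\min\{s,s^\ast\}}(m-w_d)^2b^{m-w_d}$.

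Adding the three contributions gives the stated total. I expect the main obstacle to lie not in the bookkeeping of the sums but in making the matrix--vector step fully rigorous: one must exhibit the explicit isomorphism $\cU_k\cong\bigotimes_{j=1}^r G_j$ --- for which one can invoke the constructive decomposition of \cite{SG85} --- and verify that under this relabelling $\Omega^{(k)}$ acts as a tensor product of cyclic-shift operators, so that the FFT machinery applies factor-wise with the claimed complexity. Once the $O(k^2b^k)$ cost for $\Omega^{(k)}$ is secured, the remainder is a direct transcription of the corresponding argument in Section~\ref{sec:fast-mod-cbc}.
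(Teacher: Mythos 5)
Your proposal is correct and follows essentially the same route as the paper: pre-computation of $\varphi_\alpha$ in $O(b^m)$ operations, per-component folding of $\eta_{d-1}$ and updating of $\eta_d$ in $O(b^m)$ operations each, and the matrix--vector product with $\Omega^{(m-w_d)}$ handled via the lemma on the unit group $\cU_k$ of $\FF_b[x]/(x^k)$ decomposing into at most $k$ cyclic factors (with \cite{SG85} supplying the explicit decomposition), so that the Nuyens--Cools machinery yields $O(k^2 b^k)$ per component and the three contributions sum to the stated bound. Your added remark on verifying the tensor-product/convolution structure under the group relabelling is a fair elaboration of what the paper leaves implicit in its appeal to \cite{NC06a, NC06b}.
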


\section*{Appendix: The proof of Theorem~\ref{thmcbclpspp}}

\begin{proof}
We show the result by induction on $d$. For $d=1$, we have $z_1  = 1$. Thus we have
$$e_{N,1}^2 (Y_1 z_1)=\gamma_{\{1\}}\sum_{\substack{h\in\ZZ_{\ast}\\ h Y_1 z_1\equiv 0\ (N)}} \rho_{\alpha} (h).$$
Let now $\lambda\in(1/\alpha,1]$. 

We now use Jensen's inequality. This inequality states that for non-negative numbers 
$a_k$ and $p\in(0,1]$, it is true that 
$$\sum_k a_k\le \left(\sum_k a_k^p\right)^{1/p}.$$

Applying Jensen's inequality to $e_{N,1}^2 (Y_1 z_1)$, and noting that 
$(\rho_\alpha (h))^\lambda=\rho_{\alpha\lambda} (h)$, we obtain
$$e_{N,1}^{2\lambda} (Y_1 z_1)\le \gamma_{\{1\}}^{\lambda}\sum_{\substack{h\in\ZZ_{\ast}\\ 
h Y_1 z_1\equiv 0\ (N)}} \rho_{\alpha\lambda}(h). $$

If $w_1\ge m$, then $\cZ_{N,w_1}=\{1\}$, $z_1=1$ and $N | b^{w_1}$. In this case, the condition $h Y_1 z_1\equiv 0\ (N)$ is satisfied 
for any $h\in\ZZ_{\ast}$. Consequently,
$$
 e_{N,1}^{2\lambda} (Y_1 z_1)\le \gamma_{\{1\}}^\lambda\sum_{h\in\ZZ_{\ast}} \rho_{\alpha\lambda}(h)\\
= \gamma_{\{1\}}^\lambda 2\zeta(\alpha\lambda)
\le \gamma_{\{1\}}^\lambda \frac{4\zeta(\alpha\lambda)}{b^{\max\{0,m-w_1\}}},
$$
hence \eqref{eqthmcbclpspp} is shown for this case.

If $w_1<m$, then we estimate $e_{N,1}^2 (Y_1 z_1)$ as follows. Since $z_1 = 1$,
\begin{align*}
e_{N,1}^{2\lambda} (Y_1 z_1)\le& \gamma_{\{1\}}^{\lambda} \sum_{\substack{h\in\ZZ_{\ast}\\ h Y_1  \equiv 0\ (N)}} \rho_{\alpha \lambda} (h) =   \gamma_{\{1\}}^\lambda \sum_{\substack{h\in\ZZ_{\ast}\\  b^{m-w_1} | h}} \rho_{\alpha\lambda} (h) = \gamma_{\{1\}}^\lambda \sum_{h \in \ZZ_\ast} \rho_{\alpha \lambda}(h b^{m-w_1}).
\end{align*}
Since $\rho_{\alpha \lambda}(h b^{m-w_1}) = b^{-\alpha \lambda (m-w_1)} \rho(h)$ and $\alpha \lambda > 1$, we obtain
\begin{align*}
e_{N,1}^{2\lambda} (Y_1 z_1) \le & \gamma_{\{1\}}^{\lambda} b^{-\alpha \lambda (m-w_1)} \sum_{h\in\ZZ_{\ast}} \rho_{\alpha \lambda} (h) = \gamma_{\{1\}}^\lambda b^{-\alpha \lambda (m-w_1)} 2 \zeta(\alpha \lambda) \le \gamma_{\{1\}}^\lambda \frac{4\zeta(\alpha\lambda)}{b^{\max\{0,m-w_1\}}},
\end{align*}
hence  \eqref{eqthmcbclpspp} is also shown for this case.

Assume now that we have shown the result for some fixed $d\in [s-1]$, i.e., the generating vector $\bsz_d=(Y_1 z_1,\ldots,Y_d z_d)$ satisfies 
$$e_{N,d}^{2\lambda} ((Y_1 z_1,\ldots,Y_d z_d))\le \sum_{\emptyset\neq\uu\subseteq [d]}\gamma_\uu^\lambda 
\frac{2(2\zeta (\alpha\lambda))^{\abs{\uu}}}{b^{\max\{0,m-\max_{j\in\uu} w_j\}}}.$$
Furthermore, assume that $z_{d+1}\in \cZ_{N,w_{d+1}}$ has been chosen according to Algorithm~\ref{algcbclpspp}. We then have
\begin{align*}
 e_{N,d+1}^2 (\bsz_d,Y_{d+1} z_{d+1})=&\sum_{\emptyset\neq\uu\subseteq [d+1]}\gamma_\uu\sum_{ 
\bsh_\uu\in\D_\uu}\rho_\alpha (\bsh_\uu)\\
=&\sum_{\emptyset\neq\uu\subseteq [d]}\gamma_\uu\sum_{\bsh_\uu\in\D_\uu}
\rho_\alpha (\bsh_\uu)+\sum_{\substack{\emptyset\neq\uu\subseteq [d+1]\\
\{d+1\}\subseteq\uu}}\gamma_\uu\sum_{\bsh_\uu\in\D_\uu}\rho_\alpha (\bsh_\uu)\\ 
=&e_{N,d}^2 (\bsz_d) +\sum_{\substack{\emptyset\neq\uu\subseteq [d+1]\\
\{d+1\}\subseteq\uu}}\gamma_\uu\sum_{\bsh_\uu\in\D_\uu} \rho_\alpha (\bsh_\uu)\\ 
\le&\left(\sum_{\emptyset\neq\uu\subseteq [d]}\gamma_\uu^\lambda 
\frac{2(2\zeta (\alpha\lambda))^{\abs{\uu}}}{b^{\max\{0,m-\max_{j\in\uu} w_j\}}}\right)^{1/\lambda}+
\theta (z_{d+1}),
\end{align*}
where we used the induction assumption and where we write
\begin{equation}\label{eqthetalpspp}
\theta (z_{d+1})=\sum_{\substack{\emptyset\neq\uu\subseteq [d+1]\\
\{d+1\}\subseteq\uu}}\gamma_\uu\sum_{\bsh_\uu\in\D_\uu} \rho_\alpha (\bsh_\uu),
\end{equation}
where the dependence on $z_{d+1}$ in the right hand side is in $\D_\uu$. 

By employing Jensen's inequality, we obtain
\begin{equation}\label{eqJensenlpspp}
e_{N,d+1}^{2\lambda} (\bsz_d,z_{d+1})\le 
 \sum_{\emptyset\neq\uu\subseteq [d]}\gamma_\uu^\lambda 
\frac{2(2\zeta (\alpha\lambda))^{\abs{\uu}}}{b^{\max\{0,m-\max_{j\in\uu} w_j\}}} + \left(\theta (z_{d+1})\right)^{\lambda}.
\end{equation}
We now analyze the expression $\left(\theta (z_{d+1})\right)^{\lambda}$. As $z_{d+1}$ was chosen to minimize the squared worst-case error, 
we obtain
$$\left(\theta (z_{d+1})\right)^{\lambda}\le \frac{1}{|\cZ_{N,w_{d+1}}|}
\sum_{z\in \cZ_{N,w_{d+1}}}\left(\theta(z)\right)^{\lambda},$$
where $\theta(z)$ is the analogue of \eqref{eqthetalpspp} for $z\in \cZ_{N,w_{d+1}}$.
We now have, using Jensen's inequality twice,
\begin{align*}
(\theta(z))^{\lambda}\le&\sum_{\substack{\emptyset\neq\uu\subseteq [d+1]\\
\{d+1\}\subseteq\uu}}\gamma_\uu^\lambda\sum_{\bsh_\uu\in\D_\uu}\rho_{\alpha\lambda} (\bsh_\uu)\\
=&\gamma_{\{d+1\}}^\lambda \sum_{h_{d+1}\in\D_{\{d+1\}}} \rho_{\alpha\lambda}(h_{d+1})\\
&+\sum_{\emptyset\neq\vv\subseteq [d]}\gamma_{\vv\cup\{d+1\}}^\lambda\sum_{h_{d+1}\in\ZZ_{\ast}} \rho_{\alpha\lambda} (h_{d+1}) 
\sum_{\substack{\bsh_\vv\in\ZZ_{\ast}^{\abs{\vv}}\\ \sum_{j\in \vv} h_j Y_j z_j\equiv - h_{d+1} Y_{d+1} z\ (N) }} \rho_{\alpha\lambda} (\bsh_\vv),
\end{align*}
and therefore
\begin{align*}
\lefteqn{ \left(\theta (z_{d+1})\right)^{\lambda}\le\frac{1}{|\cZ_{N,w_{d+1}}|}
\sum_{z\in \cZ_{N,w_{d+1}}}\gamma_{\{d+1\}}^\lambda \sum_{h_{d+1}\in\D_{\{d+1\}}} \rho_{\alpha\lambda}(h_{d+1})}\\
&+\frac{1}{|\cZ_{N,w_{d+1}}|}
\sum_{z\in \cZ_{N,w_{d+1}}}\sum_{\emptyset\neq\vv\subseteq [d]}\gamma_{\vv\cup\{d+1\}}^\lambda\sum_{h_{d+1}\in\ZZ_{\ast}} \rho_{\alpha\lambda} (h_{d+1}) 
\!\!\!\!\!\!\!\!\!\!\sum_{\substack{\bsh_\vv\in\ZZ_{\ast}^{\abs{\vv}}\\ 
\sum_{j\in \vv}h_j Y_j z_j\equiv - h_{d+1} Y_{d+1} z\ (N)}}\!\!\!\!\!\!\!\!\!\! \rho_{\alpha\lambda} (\bsh_\vv)\\
=:&T_1 + T_2.
\end{align*}

For $T_1$, we see, in exactly the same way as for $d=1$, that
\begin{equation}\label{eqT1lpspp}
 T_1\le \frac{4\gamma_{\{d+1\}}^\lambda \zeta (\alpha\lambda)}{b^{\max\{0,m-w_{d+1}\}}}.
\end{equation}

Regarding $T_2$, we again distinguish two cases. If $w_{d+1}\ge m$, we have $\cZ_{N,w_{d+1}}=\{0\}$, and
thus $T_2$ simplifies to
\begin{align}\label{eqT2_1lpspp}
 T_2=&\frac{1}{|\cZ_{N,w_{d+1}}|}
\sum_{z\in \cZ_{N,w_{d+1}}}\sum_{\emptyset\neq\vv\subseteq [d]}\gamma_{\vv\cup\{d+1\}}^\lambda\sum_{h_{d+1}\in\ZZ_{\ast}} \rho_{\alpha\lambda} (h_{d+1}) 
\!\!\!\!\!\!\!\!\!\!\sum_{\substack{\bsh_\vv\in\ZZ_{\ast}^{\abs{\vv}}\\ 
\sum_{j\in \vv} h_j Y_j z_j\equiv 0\ (N)}}\!\!\!\!\!\!\!\!\!\! \rho_{\alpha\lambda} (\bsh_\vv)\nonumber\\
=&\frac{2\zeta (\alpha\lambda)}{b^{\max\{0,m-w_{d+1}\}}}
\sum_{\emptyset\neq\vv\subseteq [d]}\gamma_{\vv\cup\{d+1\}}^\lambda 
\!\!\!\!\!\!\!\!\!\!\sum_{\substack{\bsh_\vv\in\ZZ_{\ast}^{\abs{\vv}}\\ 
\sum_{j\in \vv}h_j Y_j z_j\equiv 0\ (N)}}\!\!\!\!\!\!\!\!\!\! \rho_{\alpha\lambda} (\bsh_\vv)\nonumber\\
<&\frac{4\zeta (\alpha\lambda)}{b^{\max\{0,m-w_{d+1}\}}}
\sum_{\emptyset\neq\vv\subseteq [d]}\gamma_{\vv\cup\{d+1\}}^\lambda 
\sum_{\bsh_\vv\in\ZZ_{\ast}^{\abs{\vv}}} \rho_{\alpha\lambda} (\bsh_\vv)\nonumber\\
=&\frac{4\zeta (\alpha\lambda)}{b^{\max\{0,m-w_{d+1}\}}}
\sum_{\emptyset\neq\vv\subseteq [d]}\gamma_{\vv\cup\{d+1\}}^\lambda 
(2\zeta(\alpha\lambda))^{\abs{\vv}}\nonumber\\
=&\sum_{\emptyset\neq\vv\subseteq [d]}\gamma_{\vv\cup\{d+1\}}^\lambda
\frac{2(2\zeta(\alpha\lambda))^{\abs{\vv}+1}}{b^{\max\{0,m-w_{d+1}\}}}.
\end{align}

On the other hand, if $w_{d+1}<m$, we obtain 
\begin{align*}
T_2=&\frac{1}{b^{m-w_{d+1}-1}(b-1)}\\
& \times \left[
\sum_{z\in \cZ_{N,w_{d+1}}}\sum_{\emptyset\neq\vv\subseteq [d]}\gamma_{\vv\cup\{d+1\}}^\lambda
\sum_{\substack{h_{d+1}\in\ZZ_{\ast}\\ h_{d+1}\equiv 0\ (b^{m-w_{d+1}})}} \rho_{\alpha\lambda} (h_{d+1}) 
\!\!\!\!\!\!\!\!\!\!\sum_{\substack{\bsh_\vv\in \ZZ_{\ast}^{\abs{\vv}}\\ 
\sum_{j\in \vv}h_j Y_j z_j\equiv - h_{d+1} Y_{d+1} z\ (N)}}\!\!\!\!\!\!\!\!\!\! \rho_{\alpha\lambda} (\bsh_\vv)\right.\\
&+ \left.
\sum_{z\in \cZ_{N,w_{d+1}}}\sum_{\emptyset\neq\vv\subseteq [d]}\gamma_{\vv\cup\{d+1\}}^\lambda
\sum_{\substack{h_{d+1}\in\ZZ_{\ast}\\ h_{d+1}\not\equiv 0\ (b^{m-w_{d+1}})}} \rho_{\alpha\lambda} (h_{d+1}) 
\!\!\!\!\!\!\!\!\!\!\sum_{\substack{\bsh_\vv\in \ZZ_{\ast}^{\abs{\vv}}\\ 
\sum_{j\in \vv} h_j Y_j z_j\equiv - h_{d+1} Y_{d+1} z\ (N)}}\!\!\!\!\!\!\!\!\!\! \rho_{\alpha\lambda} (\bsh_\vv)\right]\\
=:&T_{2,1}+T_{2,2}.
\end{align*}
For the term $T_{2,1}$, note that if $h_{d+1}\equiv 0\ (b^{m-w_{d+1}})$, then $h_{d+1} Y_{d+1} z\equiv 0\, (N)$, so we obtain
\begin{align*}
 T_{2,1}=&\frac{1}{b^{m-w_{d+1}-1}(b-1)}\\
& \times \sum_{z\in \cZ_{N,w_{d+1}}}\sum_{\emptyset\neq\vv\subseteq [d]}\gamma_{\vv\cup\{d+1\}}^\lambda
\sum_{\substack{h_{d+1}\in\ZZ_{\ast}\\ h_{d+1}\equiv 0\ (b^{m-w_{d+1}})}}  \rho_{\alpha\lambda} (h_{d+1}) 
\sum_{\substack{\bsh_\vv\in \ZZ_{\ast}^{\abs{\vv}}\\ 
\sum_{j\in \vv} h_j Y_j z_j\equiv 0\ (N)}} \rho_{\alpha\lambda} (\bsh_\vv)\\
=&\sum_{\emptyset\neq\vv\subseteq [d]}\gamma_{\vv\cup\{d+1\}}^\lambda
 \sum_{\substack{\bsh_\vv\in \ZZ_{\ast}^{\abs{\vv}}\\ 
\sum_{j\in \vv}h_j Y_j z_j\equiv 0\ (N)}} \rho_{\alpha\lambda} (\bsh_\vv)
\sum_{\substack{h_{d+1}\in\ZZ_{\ast}\\ h_{d+1}\equiv 0\ (b^{m-w_{d+1}})}} \rho_{\alpha\lambda} (h_{d+1})\\
=&\frac{2\zeta (\alpha\lambda)}{(b^{m-w_{d+1}})^{\alpha\lambda}}\sum_{\emptyset\neq\vv\subseteq [d]}\gamma_{\vv\cup\{d+1\}}^\lambda
 \sum_{\substack{\bsh_\vv\in \ZZ_{\ast}^{\abs{\vv}}\\ 
\sum_{j\in \vv} h_j Y_j z_j\equiv 0\ (N)}} \rho_{\alpha\lambda} (\bsh_\vv).
\end{align*}
From this, it is easy to see that
$$T_{2,1}\le \frac{4\zeta (\alpha\lambda)}{b^{m-w_{d+1}}}\sum_{\emptyset\neq\vv\subseteq [d]}\gamma_{\vv\cup\{d+1\}}^\lambda
 \sum_{\substack{\bsh_\vv\in \ZZ_{\ast}^{\abs{\vv}}\\ 
\sum_{j\in \vv} h_j Y_j z_j\equiv 0\ (N)}} \rho_{\alpha\lambda} (\bsh_\vv).$$

Regarding $T_{2,2}$, note that $h_{d+1}\not\equiv 0\ (b^{m-w_{d+1}})$ and $z\in \cZ_{N,w_{d+1}}$ implies $h_{d+1} Y_{d+1} z \not\equiv 0\ (N)$, 
and for $z_1,z_2\in \cZ_{N,w_{d+1}}$ with $z_1\neq z_2$ we have $h_{d+1} Y_{d+1} z_1 \not\equiv h_{d+1} Y_{d+1} z_2\ (N)$. Therefore,
\begin{align*}
 T_{2,2}\le&\frac{1}{b^{m-w_{d+1}-1}(b-1)}
\sum_{\emptyset\neq\vv\subseteq [d]}\gamma_{\vv\cup\{d+1\}}^\lambda
\sum_{\substack{h_{d+1}\in\ZZ_{\ast}\\ h_{d+1}\not\equiv 0\ (b^{m-w_{d+1}})}} \rho_{\alpha\lambda} (h_{d+1}) 
\!\!\!\!\!\!\!\!\!\!\sum_{\substack{\bsh_\vv\in\ZZ_{\ast}^{\abs{\vv}}\\ 
\sum_{j\in \vv}h_j Y_j z_j\not\equiv 0\ (N)}}\!\!\!\!\!\!\!\!\!\! \rho_{\alpha\lambda} (\bsh_\vv)\\
\le&\frac{2}{b^{m-w_{d+1}}}
\sum_{\emptyset\neq\vv\subseteq [d]}\gamma_{\vv\cup\{d+1\}}^\lambda
\sum_{k_{d+1} \in \ZZ_{\ast}} \rho_{\alpha\lambda} (k_{d+1}) 
\!\!\!\!\!\!\!\!\!\!\sum_{\substack{\bsh_\vv\in\ZZ_{\ast}^{\abs{\vv}}\\ 
\sum_{j\in \vv}h_j Y_j z_j\not\equiv 0\ (N)}}\!\!\!\!\!\!\!\!\!\! \rho_{\alpha\lambda} (\bsh_\vv)\\
=&\frac{4\zeta (\alpha\lambda)}{b^{m-w_{d+1}}}
\sum_{\emptyset\neq\vv\subseteq [d]}\gamma_{\vv\cup\{d+1\}}^\lambda 
\!\!\!\!\!\!\!\!\!\!\sum_{\substack{\bsh_\vv\in\ZZ_{\ast}^{\abs{\vv}}\\ 
\sum_{j\in \vv}h_j Y_j z_j\not\equiv 0\ (N)}}\!\!\!\!\!\!\!\!\!\! \rho_{\alpha\lambda} (\bsh_\vv)\\
=&\frac{4\zeta (\alpha\lambda)}{b^{m-w_{d+1}}}
\sum_{\emptyset\neq\vv\subseteq [d]}\gamma_{\vv\cup\{d+1\}}^\lambda 
\left(\sum_{\bsh_\vv\in\ZZ_{\ast}^{\abs{\vv}}} \rho_{\alpha\lambda} (\bsh_\vv)
-\!\!\!\!\!\!\!\!\!\sum_{\substack{\bsh_\vv\in\ZZ_{\ast}^{\abs{\vv}}\\ 
\sum_{j\in \vv}h_j Y_j z_j\equiv 0\ (N)}}\!\!\!\!\!\!\!\!\!\! \rho_{\alpha\lambda} (\bsh_\vv)\right).
\end{align*}
This yields
\begin{align}\label{eqT2_2lpspp}
T_2=& T_{2,1}+T_{2,2}\nonumber\\ 
\le& \frac{4\zeta (\alpha\lambda)}{b^{m-w_{d+1}}}
\sum_{\emptyset\neq\vv\subseteq [d]}\gamma_{\vv\cup\{d+1\}}^\lambda 
\sum_{\bsh_\vv\in\ZZ_{\ast}^{\abs{\vv}}} \rho_{\alpha\lambda} (\bsh_\vv)\nonumber\\
=&\sum_{\emptyset\neq\vv\subseteq [d]}\gamma_{\vv\cup\{d+1\}}^\lambda 
\frac{2(2\zeta (\alpha\lambda))^{\abs{\vv}+1}}{b^{m-w_{d+1}}}\nonumber\\
=&\sum_{\emptyset\neq\vv\subseteq [d]}\gamma_{\vv\cup\{d+1\}}^\lambda 
\frac{2(2\zeta (\alpha\lambda))^{\abs{\vv}+1}}{b^{\max\{0,m-w_{d+1}\}}}
\end{align}
Combining \eqref{eqT1lpspp}, \eqref{eqT2_1lpspp}, and \eqref{eqT2_2lpspp} yields 
$$(\theta (g_{d+1}))^\lambda\le \sum_{\substack{\uu\subseteq [d+1]\\ \{d+1\}\subseteq\uu}}\gamma_\uu^\lambda 
\frac{2(2\zeta (\alpha\lambda))^{\abs{\uu}}}{b^{\max\{0,m-w_{d+1}\}}}.$$
Plugging into \eqref{eqJensenlpspp}, we obtain
$$e_{N,d+1}^{2 \lambda}(\bsg,g_{d+1}) \le \sum_{\emptyset\neq\uu\subseteq [d]}\gamma_\uu^\lambda 
\frac{2(2\zeta (\alpha\lambda))^{\abs{\uu}}}{b^{\max\{0,m-\max_{j\in\uu} w_j\}}} 
+\sum_{\substack{\uu\subseteq [d+1]\\ \{d+1\}\subseteq\uu}}\gamma_\uu^\lambda 
\frac{2(2\zeta (\alpha\lambda))^{\abs{\uu}}}{b^{\max\{0,m-w_{d+1}\}}}.$$
This yields the result for $d+1$. 
\end{proof}

\noindent {\bf Addresses:} \\

Josef Dick, School of Mathematics and Statistics, The University of New South Wales, Sydney, 2052 NSW, Australia. e-mail: josef.dick(AT)unsw.edu.au \\

Peter Kritzer, Gunther Leobacher, Friedrich Pillichshammer, Department of Financial Mathematics, Johannes Kepler University Linz, Altenbergerstr. 69, 
4040 Linz, Austria. e-mail: {peter.kritzer, gunther.leobacher, friedrich.pillichshammer}(AT)jku.at

\end{document}